\documentclass[12pt]{article}
\usepackage{amsmath,amssymb,amsthm,amsfonts,latexsym,amscd}
\usepackage{graphicx}
\setlength{\textwidth}{6.3in}
\setlength{\textheight}{9.3in}
\setlength{\topmargin}{0pt}
\setlength{\headsep}{0pt}
\setlength{\headheight}{0pt}
\setlength{\oddsidemargin}{0pt}
\setlength{\evensidemargin}{0pt}
\newtheorem{theorem}{Theorem}[section]
\newtheorem{corollary}[theorem]{Corollary}
\newtheorem{remark}[theorem]{Remark}
\newtheorem{lemma}[theorem]{Lemma}
\newtheorem{proposition}[theorem]{Proposition}

\theoremstyle{remark}
\def\PG{\mbox{\rm PG}}

\begin{document}
\title{Characterizing parabolic hyperplanes of the hyperbolic and elliptic quadrics in $\PG(2n+1,q)$}
\author{Bikramaditya Sahu}
\date{}
\maketitle

\begin{abstract}
In this article, a combinatorial characterization of the family of parabolic hyperplanes of a hyperbolic (respectively, elliptic) quadric of $\PG(2n+1,q)$, using their intersection properties with the points and subspaces of codimension 2, is given.  \\

{\bf Keywords:} Projective space, Hyperbolic quadric, Elliptic quadric, Parabolic hyperplane, Combinatorial characterization\\

{\bf AMS 2010 subject classification:} 05B25, 51E20
\end{abstract}

\section{Introduction}

The characterization of classical polar spaces by means of intersection properties with the subspaces of the original projective space can be traced back to the 1950's seminal work by Segre and Talini and their schools. This was continued by Beutelspacher, Thas and their collaborators in the 1970s and 1980s. One can refer to \cite{De-Du} for a detailed study in the past two decades. In 1991, Tallini and Ferri gave a characterization of the parabolic quadric of $\PG(4,q)$ by its intersection properties with planes and hyperplanes \cite{F-T}. Characterizations of non-singular quadrics and non-singular Hermitian varieties  by intersection numbers with hyperplanes and spaces of codimension 2 is given in \cite{De-Sch}. This generalizes the result of Tallini and Ferri.

A characterization of the family of planes meeting a non-degenerate quadric in $\PG(4,q)$ is given in \cite{But}. In a series of two recent papers \cite{BHJ,BHJS}, characterizations of elliptic hyperplanes and hyperbolic hyperplanes in $\PG(4,q)$ are given. In a recent paper \cite{Sc-Van} , the characterization of elliptic and hyperplanes in $\PG(2n,q)$ with respect to a parabolic quadric is given. In \cite{sahu}, a characterization of planes of $\PG(3,q)$ meeting a hyperbolic quadric in an irreducible conic is given. In this paper, we generalize the result in \cite{sahu} to arbitrary dimension by characterizing parabolic hyperplanes of a hyperbolic as well as an elliptic quadric in $\PG(2n+1,q)$. 

\begin{remark}
Throughout the paper we treat the similar elliptic and hyperbolic case simultaneously. We follow the following convention: we use the $\pm$ and the $\mp$ symbol, where $\pm$ reads as $+$ when we are in the hyperbolic case and $-$ in the elliptic case and vice versa for $\mp$. All statements and their proofs can be read by choosing either top or bottom row of the symbols $\pm$ and $\mp$ consistently. So, for every statement containing $\pm$ or $\mp$, there are two different statements.
\end{remark}
In this paper, we prove the following characterization theorem.

\begin{theorem} \label{main}
Let $\Sigma^\pm$ be a nonempty family of hyperplanes of $\PG(2n+1,q)$ for which the following properties are satisfied:
\begin{enumerate}
\item [(P1)] Every point of $\PG(2n+1,q)$ is contained in $q^n(q^n\mp 1)$ or $q^{2n}$ hyperplanes of $\Sigma^\pm$.
\item [(P2)] Every subspace of $\PG(2n+1,q)$ of co-dimension 2 which is contained in a hyperplane of $\Sigma^\pm$ is contained in at least $q-1$ hyperplanes of $\Sigma^\pm$.
\end{enumerate}
Then the following holds.
\begin{enumerate}
\item[(a)] For $n\geq 2$ and $q>2$, the collection $\Sigma^\pm$ is the set of all hyperplanes of $\PG(2n+1,q)$ meeting a non-singular quadric $\mathcal{Q}^\pm(2n+1,q)$ in a parabolic quadric.
\item[(b)] For $n=1$ and all $q$, the collection $\Sigma^+$ in $\PG(3,q)$ is the set of planes meeting a hyperbolic quadric in an irreducible conic.
\item[(c)] For $n=1$ and all $q$, the collection $\Sigma^-$ is the set of planes meeting an ovoid or $\Sigma^-$ is the set of planes meeting a fixed line in a unique point in $\PG(3,q)$. 
\end{enumerate}
\end{theorem}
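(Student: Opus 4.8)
\noindent The natural first move is to dualize, turning the hypotheses into intersection conditions on a point set and the conclusion into a classification of quadratic sets. Regard each hyperplane of $\PG(2n+1,q)$ as a point of the dual space $\PG(2n+1,q)^{*}$; let $\mathcal{K}$ be the point set of $\PG(2n+1,q)^{*}$ corresponding to $\Sigma^{\pm}$, and put $\mathcal{T}=\PG(2n+1,q)^{*}\setminus\mathcal{K}$. A subspace of $\PG(2n+1,q)$ of codimension $2$ corresponds to a line of $\PG(2n+1,q)^{*}$, and it lies in a hyperplane of $\Sigma^{\pm}$ exactly when that line is not contained in $\mathcal{T}$; hence (P2) is equivalent to the statement that every line of $\PG(2n+1,q)^{*}$ meets $\mathcal{T}$ in $0$, $1$, $2$ or $q+1$ points, i.e.\ that $\mathcal{T}$ is a \emph{quadratic set}. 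Dually, (P1) says that every hyperplane of $\PG(2n+1,q)^{*}$ meets $\mathcal{T}$ in $\theta_{2n-1}$ or $\theta_{2n-1}\pm q^{n}$ points, where $\theta_{k}=(q^{k+1}-1)/(q-1)$; these are precisely the cardinalities in which a non-singular quadric $\mathcal{Q}^{\pm}(2n+1,q)$ meets a non-tangent, respectively a tangent, hyperplane. So the theorem reduces to classifying the quadratic sets of $\PG(2n+1,q)$ with these two hyperplane intersection numbers.

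Next comes the bookkeeping. Writing $N=|\mathcal{K}|$, I would double count (i) the pairs $(P,H)$ with $P$ a point of $\mathcal{K}$ on a hyperplane $H$ of $\PG(2n+1,q)^{*}$, and (ii) the pairs $(\{P,P'\},H)$ with $P,P'$ distinct points of $\mathcal{K}$ both on $H$ --- using that a line of the dual space lies on $\theta_{2n-1}$ hyperplanes. This gives two relations between $N$ and $|\mathcal{T}|=\theta_{2n+1}-N$, whose solution for $n\ge2$ is $N=q^{n}(q^{n+1}\mp1)$ and $|\mathcal{T}|=(q^{n}\pm1)(q^{n+1}\mp1)/(q-1)$, the exact number of points of a non-singular quadric $\mathcal{Q}^{\pm}(2n+1,q)$; for $n=1$ a second admissible solution also survives, to which I return below. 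Thus for $n\ge2$ the set $\mathcal{T}$ is a quadratic set with precisely the cardinality and the two hyperplane intersection numbers of $\mathcal{Q}^{\pm}(2n+1,q)$.

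For $n\ge2$ and $q>2$ I would then promote this numerical agreement to a geometric one. By a dimension-shifting induction, every hyperplane section of $\mathcal{T}$ is a quadratic set in $\PG(2n,q)$ whose hyperplane intersection numbers are those of a non-singular parabolic quadric $\mathcal{Q}(2n,q)$ or of a cone with a point vertex over $\mathcal{Q}^{\pm}(2n-1,q)$; invoking lower-dimensional quadric characterisations (e.g.\ \cite{De-Sch,Sc-Van}) and the $n=1$ case below, each such section is shown to be of one of these kinds. This yields the intersection numbers of $\mathcal{T}$ with subspaces of codimension $2$ and, more importantly, shows that $\mathcal{T}$ is \emph{non-degenerate}: no point of $\mathcal{T}$ lies on all of its tangent lines, so the tangent lines through each point span a hyperplane. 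Buekenhout's classification of non-degenerate quadratic sets --- equivalently, the quadric characterisation of \cite{De-Sch} once the codimension-$2$ numbers are in hand --- then forces $\mathcal{T}$ to be a non-singular quadric, whose type the cardinality pins down as $\pm$. Dualizing back, $\mathcal{T}$ is the set of tangent hyperplanes of a non-singular $\mathcal{Q}^{\pm}(2n+1,q)$, so $\Sigma^{\pm}$ consists exactly of the hyperplanes meeting $\mathcal{Q}^{\pm}(2n+1,q)$ in a parabolic quadric, which is (a). I expect this to be the main obstacle: the hyperplane and line data of (P1)--(P2) have to be pushed all the way to controlled intersection numbers with codimension-$2$ subspaces and to non-degeneracy, and every degenerate quadratic set (cones and subspaces) has to be excluded from these numbers alone --- the restriction $q>2$ being exactly what makes Buekenhout's theorem available.

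For $n=1$ one argues directly in $\PG(3,q)^{*}$, where all quadratic sets are classified. Matching the candidate values of $(N,|\mathcal{T}|)$ from the counting against the hyperplane intersection numbers forces, in the hyperbolic case, $\mathcal{T}=\mathcal{Q}^{+}(3,q)$, whose dual is the family of planes meeting $\mathcal{Q}^{+}(3,q)$ in an irreducible conic; this is (b), and recovers \cite{sahu}. In the elliptic case the data leave two possibilities, $\mathcal{T}$ an ovoid or $\mathcal{T}$ a line, which dualize respectively to the planes meeting an ovoid in a conic and to the planes meeting a fixed line in a single point; since both of these families plainly satisfy (P1)--(P2), they are exactly the two alternatives of (c). The survival of the ``fixed line'' family when $n=1$ is precisely the degenerate behaviour that the argument for $n\ge2$ must be sharp enough to exclude.
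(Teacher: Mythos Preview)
Your dualization is a genuinely different route from the paper's. The paper never passes to the dual space: it defines the set $B^{\pm}$ of \emph{black points} in the primal $\PG(2n+1,q)$ (those lying on $q^n(q^n\mp1)$ members of $\Sigma^{\pm}$), shows from (P1) alone that $B^{\pm}$ is a quasi-quadric with the correct two hyperplane-intersection numbers (Proposition~\ref{main-prop}, via Lemmas~\ref{lem-divide}--\ref{black-tangent}), and then uses (P2) to pin down the four intersection numbers of $B^{\pm}$ with codimension-$2$ subspaces (Lemma~\ref{lem-p2}). At that point Proposition~\ref{de-sci} applies directly to $B^{\pm}$ in the primal. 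The determination of $|\Sigma^{\pm}|$ that you compress into one double count is in the paper a chain of lemmas culminating in a divisibility argument (Lemma~\ref{lem-useful}, Lemma~\ref{r-value}) to exclude spurious values; this is where the extra solution for $n=1$ in the elliptic case survives (Lemma~\ref{valur-r-n=1}), and the paper then handles that branch separately via Bose--Burton (Theorem~\ref{main-n=1}).

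Your observation that (P2) makes $\mathcal{T}$ a set of line-class $\{0,1,2,q+1\}$ in the dual is correct and elegant, but the step ``line condition plus two hyperplane numbers $\Rightarrow$ quadric'' is where your sketch has a real gap. Buekenhout's theorem requires, beyond the line condition, the tangent-space axiom (through every point of $\mathcal{T}$ the tangent lines span exactly a hyperplane), and you do not establish it. Your proposed fix is a dimension-shifting induction on hyperplane sections of $\mathcal{T}$, but to run that induction you would need the hyperplane-intersection numbers of each section $\mathcal{T}\cap H$, i.e.\ the intersection numbers of $\mathcal{T}$ with codimension-$2$ subspaces of the \emph{dual}; those correspond to \emph{lines} of the primal $\PG(2n+1,q)$, about which (P1) and (P2) say nothing. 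So neither Buekenhout nor Proposition~\ref{de-sci} is available to you in the dual as stated, and the induction does not get started. The paper's decision to work with $B^{\pm}$ in the primal is precisely what makes (P2) land on codimension-$2$ subspaces of the \emph{same} space as the candidate quadric, so that Proposition~\ref{de-sci} applies without any further structural input. Your approach trades an immediate quadratic-set line condition for the loss of this alignment; to close it you would need an independent argument (not supplied) deriving either the tangent axiom or the dual codimension-$2$ numbers from the two dual hyperplane numbers together with the line condition.
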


\begin{remark}
(1) Consider the first case of Theorem \ref{main}(c) for the set $\Sigma^-$.  For $q$ odd, by Barloti \cite{Bar}, every ovoid in $\PG(3,q)$ is an elliptic quadric. So the collection $\Sigma^-$ will be the set of planes of $\PG(3,q)$ meeting an elliptic quadric in an irreducible conic. For $q$ even, there are non-classical ovoids in $\PG(3,q)$, so the planes meeting the ovoid in an oval satisfy properties (P1) and (P2).\\
(2) By a similar argument as that of \cite[Remark 1.3]{Sc-Van}, properties (P1) and (P2) of Theorem \ref{main} are independent of each other.
\end{remark}

We use the following two results \cite{De-Sch,sahu} to prove Theorem \ref{main}.

\begin{proposition}\cite[Theorem 1.6]{De-Sch}\label{de-sci}
If a point set $\mathcal{B}$ in $\PG(k,q)$, $k\geq 4$, $q>2$, or $\PG(4,2)$ has the same intersection numbers with respect to hyperplanes and subspaces of codimension 2 as a non-singular polar space $\mathcal{P}\in \{\mathcal{H}(k,q), \mathcal{Q}^+(k,q), \mathcal{Q}^-(k,q), \mathcal{Q}(k,q)\}$, then $\mathcal{B}$ is the point set of the non-singular polar space $\mathcal{P}$.
\end{proposition}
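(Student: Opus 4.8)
The plan is to deduce that the combinatorial data force $\mathcal B$ to be the point set of $\mathcal P$ in three stages: fix the cardinality, extract the low-dimensional intersection numbers, and then recognize $\mathcal B$ geometrically as a quadric or a Hermitian variety. For the cardinality I would use the standard counting identities: writing $x_i$ for the number of hyperplanes meeting $\mathcal B$ in exactly $i$ points, and $\theta_j=(q^{j+1}-1)/(q-1)$, one has
\[
\sum_i x_i=\theta_{k},\qquad \sum_i i\,x_i=|\mathcal B|\,\theta_{k-1},\qquad \sum_i i(i-1)\,x_i=|\mathcal B|\,(|\mathcal B|-1)\,\theta_{k-2},
\]
obtained by counting all hyperplanes, incident $(\text{point of }\mathcal B,\text{hyperplane})$ pairs, and ordered pairs of distinct points of $\mathcal B$ contained in a common hyperplane. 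Since the intersection sizes occurring for $\mathcal B$ are exactly those of $\mathcal P$, these relations (supplemented, in the three-character parabolic case, by the codimension-2 data) determine the frequencies $x_i$ and yield $|\mathcal B|=|\mathcal P|$.

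The heart of the argument is the passage from the codimension-1 data to the fine structure of $\mathcal B$, and this is precisely where the codimension-2 hypothesis is indispensable. The basic device is the pencil identity: for a codimension-2 subspace $\pi$ the $q+1$ hyperplanes through $\pi$ satisfy
\[
\sum_{H\supset\pi}|\mathcal B\cap H|=|\mathcal B|+q\,|\mathcal B\cap\pi|,
\]
because a point of $\mathcal B$ outside $\pi$ lies in a unique hyperplane through $\pi$ while a point on $\pi$ lies in all $q+1$ of them. As both $|\mathcal B\cap\pi|$ and every $|\mathcal B\cap H|$ are confined to the few values occurring for $\mathcal P$, applying this identity to all $\pi$ rigidly controls how secant and tangent hyperplanes are distributed about each codimension-2 space, and from this one computes the intersection numbers of $\mathcal B$ with lines (and, in the parabolic case, with planes). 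I would carry this out with $\PG(4,q)$ as the decisive case, where codimension-2 subspaces are planes and the bookkeeping reduces to a Tallini--Ferri type computation \cite{F-T}, the value $q=2$ being treated by a direct separate count.

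Once every line is known to meet $\mathcal B$ in $0,1,2$ or $q+1$ points (quadric cases) or in $0,1,2$ or $\sqrt q+1$ points (Hermitian case), I would identify $\mathcal B$ with the polar space by a geometric characterization. In the quadric cases the aim is to verify that $\mathcal B$ is a quadratic set in the sense of Buekenhout, i.e.\ that the tangent lines through each point of $\mathcal B$ fill out a hyperplane; the pencil counting above is exactly what delivers this tangent-hyperplane property. Buekenhout's theorem then identifies $\mathcal B$ with a non-singular quadric, and the intersection numbers single out the type $\mathcal Q$, $\mathcal Q^+$ or $\mathcal Q^-$. The Hermitian case is closed by the analogous characterization of $\mathcal H(k,q)$ through its line and tangent structure. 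Since the quadratic-set (respectively Hermitian) property, once established, yields the conclusion uniformly for every $k\ge 4$, there is no need for a delicate dimension induction here.

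The main obstacle, and the reason the codimension-2 condition cannot be omitted, is that the hyperplane intersection numbers alone do not determine $\mathcal B$: there exist quasi-quadrics---point sets sharing all hyperplane intersection numbers with a quadric---that are not quadrics. Hence the real content of the proof is to show that the extra codimension-2 data exclude every such non-classical configuration; concretely, that the pencil identity forces the tangent lines at each point to sweep out exactly a hyperplane rather than a larger or irregular cone. I expect this rigidity step, concentrated in dimension $4$ (with the genuinely exceptional behaviour at $q=2$), to be the most delicate part of the whole argument.
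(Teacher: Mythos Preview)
The paper does not prove this proposition at all: it is quoted verbatim as \cite[Theorem 1.6]{De-Sch} and used as a black box in the proof of Corollary~\ref{main-a-b}. There is therefore no ``paper's own proof'' to compare your attempt against.

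Your sketch is a reasonable outline of the strategy in the original De Winter--Schillewaert paper (standard counting to pin down $|\mathcal B|$, the pencil identity to propagate the codimension-$2$ data down to lines, and then a Buekenhout-type recognition of the quadratic or Hermitian set). But note that as written it is only a plan, not a proof: the step ``the pencil counting above is exactly what delivers this tangent-hyperplane property'' is the entire substance of the theorem and you have not actually carried it out. If you intend to supply a self-contained argument here rather than cite \cite{De-Sch}, that step needs to be made explicit; otherwise simply invoking the reference, as the present paper does, is the appropriate course.
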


\begin{proposition}\cite[Theorem 1.1]{sahu}\label{sahu}
Let $\Sigma^+$ be a nonempty family of planes of $\PG(3,q)$, for which the following properties are satisfied:
\begin{enumerate}
\item [(P1)] Every point of $\PG(3,q)$ is contained in  $q^2-q$ or $q^2$ planes of $\Sigma^+$.
\item [(P2)] Every line of $\PG(3,q)$ is contained in $0$, $q-1$, $q$ or $q+1$ planes of $\Sigma^+$.
\end{enumerate}
Then $\Sigma^+$ is the set of all planes of $\PG(3,q)$ meeting a hyperbolic quadric in an irreducible conic.
\end{proposition}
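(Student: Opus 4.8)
The plan is to prove both inclusions, the forward one being routine and the converse carrying all the content. For the easy direction I would fix a hyperbolic quadric $\mathcal{Q}^+(3,q)$ with associated polarity $\perp$ and verify (P1), (P2) for its secant planes. Since every tangent plane is the polar plane $P^\perp$ of a unique point $P\in\mathcal{Q}^+(3,q)$, the tangent planes through a point $X$ correspond bijectively to the points of $X^\perp\cap\mathcal{Q}^+(3,q)$, and the tangent planes through a line $\ell$ to the points of $\ell^\perp\cap\mathcal{Q}^+(3,q)$. As $X^\perp$ meets the quadric in $2q+1$ points when $X\in\mathcal{Q}^+(3,q)$ and in $q+1$ points otherwise, a point lies on $(q^2+q+1)-(2q+1)=q^2-q$ or $(q^2+q+1)-(q+1)=q^2$ secant planes, giving (P1); since $\ell^\perp$ is a line, $|\ell^\perp\cap\mathcal{Q}^+(3,q)|\in\{0,1,2,q+1\}$, so $\ell$ lies on $q+1,q,q-1$ or $0$ secant planes, giving (P2).

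For the converse I would reconstruct the quadric from the abstract family. Define $\mathcal{Q}:=\{P : P \text{ lies on exactly } q^2-q \text{ members of } \Sigma^+\}$, with $\mathcal{E}$ its complement, and set $s=|\Sigma^+|$. Double counting the point--plane incidences of members of $\Sigma^+$ along a fixed line $\ell$ gives $q^2(q+1)-q\,|\mathcal{Q}\cap\ell| = s + q\,t(\ell)$, where $t(\ell)\in\{0,q-1,q,q+1\}$ counts the members of $\Sigma^+$ on $\ell$; this forces $q\mid s$ and shows $|\mathcal{Q}\cap\ell|+t(\ell)$ is constant over all lines, so that the bounds $0\le|\mathcal{Q}\cap\ell|\le q+1$ together with (P2) and the existence of lines of different $t$-values pin down $s=q^3-q$. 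The analogous count over a fixed plane $\pi$ then yields $|\mathcal{Q}\cap\pi|=q+1$ when $\pi\in\Sigma^+$ and $|\mathcal{Q}\cap\pi|=2q+1$ otherwise, while summing over all points gives $|\mathcal{Q}|=(q+1)^2$. Thus every line meets $\mathcal{Q}$ in $0,1,2$ or $q+1$ points, every plane in $q+1$ or $2q+1$ points, and $\pi\in\Sigma^+$ if and only if $\mathcal{Q}\cap\pi$ is a $(q+1)$-set.

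The heart of the argument is to show that a point set $\mathcal{Q}\subset\PG(3,q)$ with $|\mathcal{Q}|=(q+1)^2$ and these line- and plane-intersection numbers is a hyperbolic quadric. I would analyze the local structure at a point $P\in\mathcal{Q}$: from the plane-section data one shows that each $(2q+1)$-section is a pair of distinct lines of $\mathcal{Q}$ (an arc bound excludes a line-free section of that size), that each line-free $(q+1)$-section is an oval, and that $P$ lies on exactly two lines contained in $\mathcal{Q}$. The decisive step is to prove that the tangent lines at $P$, together with these two generators, all lie in one plane $T_P$, identifying $\mathcal{Q}$ as a Buekenhout quadratic set. By the classification of quadratic sets in $\PG(3,q)$, $\mathcal{Q}$ is then a quadric or an ovoid; containing lines rules out the ovoid, and the size $(q+1)^2$ excludes the elliptic quadric and the quadratic cone, so $\mathcal{Q}=\mathcal{Q}^+(3,q)$. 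Finally $\pi\in\Sigma^+\iff|\mathcal{Q}\cap\pi|=q+1\iff\pi$ is secant, so $\Sigma^+$ is exactly the set of planes meeting $\mathcal{Q}^+(3,q)$ in an irreducible conic.

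The main obstacle is the rigidity step of the previous paragraph: passing from correct intersection numbers to an actual quadric. In $\PG(3,q)$ the hyperplane/codimension-$2$ characterization of Proposition \ref{de-sci} is unavailable, since it requires $k\ge 4$, so the coplanarity of the tangent lines at each point must be extracted directly from the plane-section data, and this local analysis has to be made uniform in $q$, with $q=2$ and $q$ even the most delicate cases. An alternative finish that sidesteps the quadratic-set classification is to reconstruct the two reguli by hand: show that the lines contained in $\mathcal{Q}$ split into two families of $q+1$ mutually skew lines with every point of $\mathcal{Q}$ on exactly one line of each family, so that $\mathcal{Q}$ is the grid swept out by a regulus. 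Either way, the crux is the same transition from numerical to geometric data.
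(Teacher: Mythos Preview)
The paper does not prove this proposition: it is quoted verbatim from the earlier paper \cite{sahu} and used as a black box. In Corollary~\ref{main-a-b} the general machinery of Sections~2.2--2.3 is applied for $n=1$ to obtain exactly the numerical data you derive (namely $|\Sigma^+|=q^3-q$, $|B^+|=(q+1)^2$, plane-sections of size $q+1$ or $2q+1$, line-sections of size $0,1,2$ or $q+1$), and then the paper simply invokes Proposition~\ref{sahu} to conclude that $B^+$ is a hyperbolic quadric. So there is nothing in the present paper to compare your rigidity step against; you have correctly located the one place where an independent argument is needed and where Proposition~\ref{de-sci} is unavailable.

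On the counting side, your derivation of the numbers differs slightly from the paper's. You pin down $s=|\Sigma^+|$ via the line identity $q^2(q+1)-q\,|\mathcal{Q}\cap\ell|=s+q\,t(\ell)$ together with the range of $t(\ell)$; the paper instead bounds $r=s/q$ using the plane-section count (Corollary~\ref{coro-bound-r}) and then forces $r=q^2-1$ by equating two independent expressions for $b^+$ (Lemma~\ref{r-value}). Your route is more direct but, as written, the step ``the existence of lines of different $t$-values pins down $s=q^3-q$'' is a genuine gap: you have not shown that both a line with $t=0$ and a line with $t=q+1$ (or some equivalent pair of extreme values) actually occur, and without that the identity only yields a congruence and a range, not a single value. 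The paper's two-expression trick for $b^\pm$ avoids this existence issue entirely and would transplant cleanly into your argument.

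Your identification of the crux is exactly right: once the intersection numbers are known, the content lies in proving that $\mathcal{Q}$ is a quadric, and the Buekenhout quadratic-set route (show tangent lines at each point are coplanar, then classify) or the direct regulus reconstruction are the two natural options. Neither is carried out in this paper; for a complete proof you would need to supply that step, and your acknowledgement that the $q$ even and $q=2$ cases require care is well placed.
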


Note that Theorem \ref{main}(b) directly follows from Proposition \ref{sahu}. Proof of Theorem \ref{main}(b) and (c) can be seen directly. Here we give a proof which works for all the cases.

A {\it quasi-quadric} in a projective space $\PG(k,q)$ is a set of points that has the same intersection numbers with respect to hyperplanes as a non-degenerate quadric in that space. It is clear that, non-degenerate quadrics are examples of quasi-quadrics, but many other examples also exist. We refer to \cite{DHOP} for a detailed study on quasi-quadrics. 
  
We first proof the following 
\begin{proposition} \label{main-prop}
Let $\Sigma^\pm$ be a non empty family of hyperplanes of $\PG(2n+1,q)$ such that
\begin{enumerate}
\item [(P1)] Every point of $\PG(2n+1,q)$ contained in  $q^n(q^n\mp 1)$ or $q^{2n}$ hyperplanes of $\Sigma^\pm$.

\end{enumerate}
Then the set $B^\pm$ of points contained in $q^n(q^n\mp 1)$ hyperplanes of $\Sigma^\pm$ forms a quasi-quadric and $\Sigma^\pm$ is the set of all hyperplanes of $\PG(2n+1,q)$ that meets $B^\pm$ in $|\mathcal{Q}(2n,q)|$ points.
\end{proposition}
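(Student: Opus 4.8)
The plan is to reduce everything to a single quadratic equation in the unknown $s:=|\Sigma^\pm|$ by a handful of standard double counts over the points and hyperplanes of $\PG(2n+1,q)$. Write $\theta_k:=\frac{q^{k+1}-1}{q-1}$ for the number of points of $\PG(k,q)$, so that $\PG(2n+1,q)$ has $\theta_{2n+1}$ points and $\theta_{2n+1}$ hyperplanes, while a subspace of codimension $2$ has $\theta_{2n-1}$ points. For a point $P$ let $x_P$ denote the number of members of $\Sigma^\pm$ through $P$; by (P1), $x_P\in\{q^n(q^n\mp1),\,q^{2n}\}$, and we put $b:=|B^\pm|$. The key bookkeeping device is the weight $e_P:=q^{2n}-x_P$, which equals $\pm q^n$ exactly when $P\in B^\pm$ and $0$ otherwise; hence $\sum_P e_P=\pm q^n b$ and $\sum_P e_P^2=q^{2n}b$, all sums being over the points of $\PG(2n+1,q)$.

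First I would compute $\sum_P x_P=\sum_{H\in\Sigma^\pm}|H|=s\,\theta_{2n}$ and, using that two distinct hyperplanes of $\PG(2n+1,q)$ meet in a subspace of codimension $2$, $\sum_P x_P^2=\sum_{H_1,H_2\in\Sigma^\pm}|H_1\cap H_2|=s\,\theta_{2n}+(s^2-s)\theta_{2n-1}$. Substituting these into the two expressions for $\sum_P e_P$ and $\sum_P e_P^2$ yields the linear relation $\pm q^n b=\theta_{2n+1}q^{2n}-s\,\theta_{2n}$ (from which, since $\gcd(q,\theta_{2n})=1$, one already gets $q^n\mid s$), and, after eliminating $b$, the quadratic
\[
\theta_{2n-1}\,s^{2}-\bigl(\theta_{2n}(2q^{n}\pm1)(q^{n}\mp1)+\theta_{2n-1}\bigr)\,s+\theta_{2n+1}\,q^{3n}(q^{n}\mp1)=0 ,
\]
whose two roots one verifies (by direct substitution, or via the product of the roots) to be $s=q^{n}(q^{n+1}\mp1)$ and $s=q^{2n}\,\frac{q^{n+1}\pm1}{q^{n}\pm1}$. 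Next I would run the same count inside a fixed hyperplane $H_0$: since $\sum_{P\in H_0}x_P=\sum_{H\in\Sigma^\pm}|H\cap H_0|$ equals $\theta_{2n}+(s-1)\theta_{2n-1}$ when $H_0\in\Sigma^\pm$ and $s\,\theta_{2n-1}$ otherwise, and $\theta_{2n}-\theta_{2n-1}=q^{2n}$, one obtains $\pm q^{n}\,|H_0\cap B^\pm|=q^{2n}\theta_{2n}-s\,\theta_{2n-1}-q^{2n}\delta$, where $\delta=1$ if $H_0\in\Sigma^\pm$ and $\delta=0$ otherwise. Thus $|H_0\cap B^\pm|$ takes exactly two values, differing by $q^{n}$; the value attained on the members of $\Sigma^\pm$ is the smaller of the two in the hyperbolic case and the larger in the elliptic case, so $\Sigma^\pm$ is precisely the set of hyperplanes meeting $B^\pm$ in this distinguished value.

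It then remains to pin $s$ down. Since $\gcd(q^{n}\pm1,q^{2n})=1$ and $q^{n+1}\pm1\equiv\mp(q-1)\pmod{q^{n}\pm1}$, the integer $q^{n}\pm1$ divides $q^{n+1}\pm1$ precisely when $q^{n}\pm1\le q-1$, i.e. precisely when $n=1$ and the elliptic sign is in force. Hence for $n\ge2$, and also for $n=1$ in the hyperbolic case, the second root above is not an integer, so $s=q^{n}(q^{n+1}\mp1)$. Substituting this value back into the linear relation gives $b=|\mathcal{Q}^\pm(2n+1,q)|$, and into the hyperplane identity gives the two intersection numbers $|\mathcal{Q}(2n,q)|$ (on the members of $\Sigma^\pm$) and $|\mathcal{Q}(2n,q)|\pm q^{n}$ (on the remaining hyperplanes) --- exactly the two numbers in which a hyperplane of $\PG(2n+1,q)$ meets $\mathcal{Q}^\pm(2n+1,q)$. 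As $0<s<\theta_{2n+1}$, both numbers really occur, so $B^\pm$ has the hyperplane intersection numbers of $\mathcal{Q}^\pm(2n+1,q)$ and is a quasi-quadric; and since the two numbers are distinct, $\Sigma^\pm$ is exactly the set of all hyperplanes of $\PG(2n+1,q)$ meeting $B^\pm$ in $|\mathcal{Q}(2n,q)|$ points.

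The step I expect to be the genuine obstacle is this last one, because of the exceptional integer root. For $n=1$ with the elliptic sign the second root $q^{2}(q+1)$ of the quadratic is also admissible and does occur: it corresponds to $\Sigma^-$ being the family of all planes of $\PG(3,q)$ meeting a fixed line in a single point, with $B^-$ the complement of that line. This is exactly the second alternative in Theorem \ref{main}(c), and it has to be separated off and treated directly. Apart from that, everything is routine bookkeeping; in particular the degenerate possibilities $\Sigma^\pm=\varnothing$, $b=0$, and "$\Sigma^\pm$ equals the set of all hyperplanes" require no special attention, since each of them would force a value of $s$ that fails the quadratic displayed above.
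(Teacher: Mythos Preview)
Your proof is correct and rests on the same double and triple counts as the paper (the paper's Equations (\ref{eq-1})--(\ref{eq-3}), Lemma \ref{lem-black-fixed}, Lemma \ref{black-tangent}). The organisation differs slightly: you package the two global counts into a single quadratic in $s=|\Sigma^\pm|$ and rule out the spurious root $q^{2n}(q^{n+1}\pm1)/(q^{n}\pm1)$ by a direct integrality check, whereas the paper first bounds $r^\pm$ via $0\le b_\pi^\pm\le|\pi|$ (Corollary \ref{coro-bound-r}) and then isolates $r^\pm=q^{n+1}\mp1$ through an auxiliary number-theoretic divisibility lemma (Lemma \ref{lem-useful}). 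Your route is a little more economical; the paper's has the mild advantage that the bound on $r^\pm$ already pins the answer to a short interval before any arithmetic is needed. You also correctly flag that for $n=1$ in the elliptic case the second root $q(q^{2}+q)$ is integral and genuinely occurs (the complement of a line), so that the Proposition as stated must be read with that case split off, exactly as the paper does in Lemma \ref{valur-r-n=1} and Theorem \ref{main-n=1}.
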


\section{Preliminaries}
\subsection{On quadrics in $PG(k,q)$}
These are all the finite non-singular polar spaces;
\begin{enumerate}
\item[$\bullet$] a hyperbolic quadric in $\PG(2n+1,q)$, denoted by $\mathcal{Q}^+(2n+1,q)$;
\item[$\bullet$]  a parabolic quadric in $\PG(2n,q)$, denoted by $\mathcal{Q}(2n,q)$;
\item[$\bullet$]  an elliptic quadric in $\PG(2n+1,q)$, denoted by $\mathcal{Q}^-(2n+1,q)$;
\item[$\bullet$]  a Hermitian quadric in $\PG(n,q^2)$, denoted by $\mathcal{H}(n,q^2)$;
\item[$\bullet$]  a sympletic polar space in $\PG(2n+1,q)$, denoted by $\mathcal{W}_{2n+1}(q)$.
\end{enumerate}

We recall the following properties of non-singular quadrics of $\PG(2n+1,q)$, one can refer to \cite{Hir-2} for the details.  The number of points in $\mathcal{Q}^\pm(2n+1,q)$ is $\dfrac{(q^{n+1}\mp 1)(q^n\pm 1)}{q-1}$.  A hyperplane of $\PG(2n+1,q)$ can intersect a $\mathcal{Q}^\pm(2n+1,q)$ either in a $\mathcal{Q}(2n,q)$ or in a cone $p\mathcal{Q}^\pm(2n-1,q)$. Through a point of $\mathcal{Q}^\pm(2n+1,q)$ there is a unique tangent hyperplane meeting $\mathcal{Q}^\pm(2n+1,q)$ in a cone $p\mathcal{Q}^\pm (2n-1,q)$. So, there are $\dfrac{q^{2n+2}-1}{q-1}-\dfrac{(q^{n+1}\mp 1)(q^n\pm 1)}{q-1}=q^n(q^{n+1}\mp 1)$ parabolic hyperplanes in $\PG(2n+1,q)$ with respect to $\mathcal{Q}^\pm(2n+1,q)$. A subspace of codimension 2 can intersect a $\mathcal{Q}^\pm (2n+1,q)$ in a $\mathcal{Q}^+(2n-1,q)$, in a cone $p\mathcal{Q}(2n-2,q)$, in a $\mathcal{Q}^-(2n-1,q)$ or in a cone $L\mathcal{Q}^\pm (2n-3,q)$.

So the intersection numbers with hyperplanes are 

$$h_1^\pm=\dfrac{q^{2n}-1}{q-1},\ \ \ h_2^\pm=1+q\dfrac{(q^n\mp 1)(q^{n-1}\pm 1)}{q-1}.$$
The intersection numbers with subspaces of codimension 2 are 
$$c_1^\pm=\dfrac{(q^n\pm 1)(q^{n-1}\mp 1)}{q-1},\ \ \  c_2^\pm=1+\dfrac{q(q^{2n-2}-1)}{q-1},$$
$$c_3^\pm=\dfrac{(q^n\mp 1)(q^{n-1}\pm 1)}{q-1},\ \ \  c_4^\pm=1+q+\dfrac{q^2(q^{n-1}\mp 1)(q^{n-2}\pm 1)}{q-1}.$$
For the above numbers one can also see \cite[Section 2.2]{De-Sch}.

\begin{proposition}\label{prop-verify}
Let $\Sigma^\pm$ be the set of hyperplanes meeting a non-singular quadric $\mathcal{Q}^\pm(2n+1,q)$ in $|\mathcal{Q}(2n,q)|$ points. Then the collection $\Sigma^\pm$ satisfies Conditions (P1) and (P2) of Theorem \ref{main}. 
\end{proposition}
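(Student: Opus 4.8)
The plan is to verify both conditions directly by a double-counting argument, using the known intersection numbers of $\mathcal{Q}^\pm(2n+1,q)$ with hyperplanes (the values $h_1^\pm, h_2^\pm$) and with subspaces of codimension $2$ (the values $c_1^\pm, c_2^\pm, c_3^\pm, c_4^\pm$) recalled above. Write $Q = \mathcal{Q}^\pm(2n+1,q)$ and observe first that $h_1^\pm = |\mathcal{Q}(2n,q)| = \frac{q^{2n}-1}{q-1}$, so that $\Sigma^\pm$ is precisely the set of parabolic hyperplanes with respect to $Q$, while the hyperplanes not in $\Sigma^\pm$ are exactly the cone-type (in particular the tangent) hyperplanes, which meet $Q$ in $h_2^\pm$ points.

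For (P1), fix a point $p$ of $\PG(2n+1,q)$ and count the number of hyperplanes of $\Sigma^\pm$ through $p$. There are three cases according to the position of $p$ relative to $Q$: $p \in Q$, $p$ an internal point, or $p$ an external point (when $q$ is even the internal/external dichotomy degenerates, but the count below still goes through). The total number of hyperplanes through $p$ is $\frac{q^{2n+1}-1}{q-1}$; among these, the number of parabolic ones can be obtained either by consulting the standard tables for the point-hyperplane incidences of a nonsingular quadric or, more self-containedly, by a short double count: counting flags $(x,H)$ with $x \in Q\setminus\{p\}$, $x$ collinear or not with $p$ on $Q$, and $H$ a hyperplane through $p$ and $x$, and comparing with the two possible values $h_1^\pm-\epsilon$, $h_2^\pm-\epsilon$ for $|Q\cap H|$ depending on whether $p\in H\cap Q$. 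One checks that the answer is $q^{2n}$ when $p\notin Q$ and $q^n(q^n\mp 1)$ when $p\in Q$ (the tangent hyperplane at $p$ being the unique nonparabolic one through $p$ among those whose quadric section contains $p$, and a short count handling the remaining ones). This is exactly the dichotomy asserted in (P1), and as a byproduct it identifies $B^\pm$ with $Q$ itself.

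For (P2), fix a codimension-$2$ subspace $\pi$ that lies in some hyperplane of $\Sigma^\pm$, i.e.\ $\pi$ is contained in at least one parabolic hyperplane. The $q+1$ hyperplanes through $\pi$ partition, and we must show at least $q-1$ of them are parabolic; equivalently at most $2$ are of cone type. Here the key input is the list of possible values of $|Q\cap\pi|$, namely $c_1^\pm, c_2^\pm, c_3^\pm, c_4^\pm$, together with the fact that if $H_0 \supseteq \pi$ is parabolic then $|Q\cap H_0| = h_1^\pm$ and $Q\cap H_0$ is a $\mathcal{Q}(2n,q)$ whose section by $\pi$ is one of the hyperplane sections of a parabolic quadric. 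One then uses the standard counting identity $|Q| = \sum_{H \supseteq \pi}\big(|Q\cap H| - |Q\cap\pi|\big) + |Q\cap\pi|$, i.e.\ $|Q| - |Q\cap\pi| = \sum_{H\supseteq\pi}(|Q\cap H|-|Q\cap\pi|)$, where the sum runs over the $q+1$ hyperplanes through $\pi$ and each term is either $h_1^\pm - |Q\cap\pi|$ or $h_2^\pm - |Q\cap\pi|$. Writing $t$ for the number of parabolic hyperplanes through $\pi$, this becomes a single linear equation $|Q| - |Q\cap\pi| = t(h_1^\pm - |Q\cap\pi|) + (q+1-t)(h_2^\pm-|Q\cap\pi|)$, which determines $t$ as a function of $|Q\cap\pi| \in \{c_1^\pm,c_2^\pm,c_3^\pm,c_4^\pm\}$. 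Evaluating this for each of the four admissible intersection numbers, one finds $t \in \{q+1,\,q,\,q-1\}$ whenever $t \geq 1$ (the case $t=0$ being excluded by hypothesis), which establishes (P2); the same computation reproduces the sharper list $0,q-1,q,q+1$ of Proposition \ref{sahu} in the case $n=1$, $+$.

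The main obstacle is the bookkeeping in (P1): one must correctly enumerate, for a point $p$ on the quadric, how many of the hyperplanes through $p$ meet $Q$ in a parabolic quadric versus a cone, and this requires either quoting the point-by-point incidence tables for $\mathcal{Q}^\pm(2n+1,q)$ from \cite{Hir-2} or reconstructing them via a two-variable double count over flags $(x,H)$ with $x\in Q$. The computation in (P2) is then purely mechanical once the four values $c_i^\pm$ and the two values $h_j^\pm$ are substituted into the linear relation above; the only subtlety is to check that the value $|Q\cap\pi| = c_2^\pm$ (the cone $p\,\mathcal{Q}(2n-2,q)$) can only be forced to have $t \geq q-1$, and to confirm that $c_4^\pm$, which corresponds to a codimension-$2$ subspace tangent in a strong sense, still yields $t\in\{0,q-1\}$ and hence does not violate (P2) when $t\neq 0$.
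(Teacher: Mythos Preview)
Your argument for (P2) is exactly the one the paper gives: the same counting identity
\[
|Q|-c_i^\pm \;=\; s\bigl(h_1^\pm-c_i^\pm\bigr)+(q+1-s)\bigl(h_2^\pm-c_i^\pm\bigr)
\]
is solved for $s$ as $c_i^\pm$ ranges over the four admissible intersection sizes, yielding $s\in\{0,q-1,q,q+1\}$. One small correction: since this equation is linear in $s$ and $h_1^\pm\neq h_2^\pm$, each value $c_i^\pm$ determines $s$ \emph{uniquely}; in particular $c_4^\pm$ (the section $L\,\mathcal{Q}^\pm(2n-3,q)$) gives exactly $s=0$, not ``$s\in\{0,q-1\}$'', and the remaining three values give $q-1,q,q+1$. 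This actually streamlines your closing paragraph rather than complicating it.

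For (P1) your approach diverges slightly from the paper's. You fix a point $p$ and propose a local flag count $(x,H)$ with $x\in Q\setminus\{p\}$ and $H\ni p,x$; this works, but your case split into $p\in Q$ / internal / external is unnecessary. The paper instead performs a \emph{global} double count of pairs $(x,\pi)$ with $x\in Q$, $\pi\in\Sigma^\pm$, $x\in\pi$ (and then the analogous count with $x\notin Q$), using the known value $|\Sigma^\pm|=q^n(q^{n+1}\mp 1)$ and implicitly that the number $m^\pm$ of parabolic hyperplanes through a point depends only on whether the point lies on $Q$; this yields the two values $q^n(q^n\mp1)$ and $q^{2n}$ in one line each. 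Either route is fine, but the paper's is shorter and avoids the internal/external distinction altogether.
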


\begin{proof}
(P1) Let $x$ be a point of $\PG(2n+1,q)$ and  let $m^\pm$ be the number of hyperplanes of $\Sigma^\pm$ through $x$. Assume that $x\in \mathcal{Q}^\pm(2n+1,q)$. We count in two different ways the number of order pairs of point-hyperplanes 

$$\{(x,\pi)\ |\ x\in \mathcal{Q}^\pm (2n+1,q), \pi \in \Sigma^\pm\ \mbox{ and } x\in \pi\}.$$ Thus we have 
$$|\mathcal{Q}^\pm(2n+1,q)|m^\pm=q^n(q^{n+1}\mp 1)\dfrac{q^{2n}-1}{q-1},$$ this gives 
$m^\pm=q^n(q^n\mp 1).$ 

Now assume that $x$ is not a point of $\mathcal{Q}^\pm (2n+1,q)$. We count in two different ways the number of order pairs of point-hyperplanes 

$$\{(x,\pi)\ |\ x\in \PG(2n+1,q)\setminus \mathcal{Q}^\pm (2n+1,q), \pi \in \Sigma^\pm\ \mbox{ and } x\in \pi\}.$$ In this case we have
$$\left(|\PG(2n+1,q)|-|\mathcal{Q}^\pm(2n+1,q)|\right)m^\pm=q^n(q^{n+1}\mp 1)\left(\dfrac{q^{2n+1}-1}{q-1}-\dfrac{q^{2n}-1}{q-1}\right),$$ this gives 
$m^\pm=q^{2n}.$ This verifies (P1).

(P2) There are four types of codimension 2 subspaces with respect to $\mathcal{Q}^\pm(2n+1,q)$. Let $\Pi$ be a codimension 2 subspace intersecting $\mathcal{Q}^\pm(2n+1,q)$ in $c_i^\pm$ points and let $s_i^\pm$ be the number of hyperplanes of $\Sigma^\pm$ containing $\Pi$ for $i=1,2,3$ and $4$. Note that the total number of points of $\mathcal{Q}^\pm(2n+1,q)$ in $\PG(2n+1,q)$ can be obtained by taking all the hyperplanes containing $\Pi$ and then counting points of $\mathcal{Q}^\pm(2n+1,q)$ in each such hyperplanes. So, we obtain the following

$$s_i^\pm(|\mathcal{Q}(2n,q)|-c_i^\pm)+(q+1-s_i^\pm)(|p\mathcal{Q}^\pm(2n-1)|-c_i^\pm)=|\mathcal{Q}^\pm(2n+1)|-c_i^\pm.$$ Solving the equation in $s_i^\pm$ for $i=1,2,3,4$, we get that a codimension 2 subspace of $\PG(2n+1,q)$ is contained in $0$, $q-1, q$ or $q+1$ hyperplanes of $\Sigma^\pm$. 
\end{proof}

\subsection{Proof of Proposition \ref{main-prop}}

Let $\Sigma^\pm$ be a nonempty set of hyperplanes of $\PG(2n+1,q)$ for which the property (P1) stated in Proposition \ref{main-prop} holds. A point of $\PG(2n+1,q)$ is said to be {\it black} or {\it white} according as it contained in  $q^n(q^n\mp 1)$ or $q^{2n}$ hyperplanes of $\Sigma^\pm$. Let $B^\pm$ be the set of black points in $\PG(2n+1,q)$ with respect to $\Sigma^\pm$. Let $b^\pm$ and $w^\pm$, respectively, denote the number of black points and the number of white points in $\PG(2n+1,q)$ with respect to $\Sigma^\pm$. We have 
\begin{equation}\label{eq-1}
w^\pm=\frac{q^{2n+2}-1}{q-1}-b^\pm.
\end{equation}

Counting in two ways the point-hyperplane incident pairs,
$$\{(x,\pi)\ | \ x \ \mbox{is a point in $\PG(2n+1,q)$}, \ \pi \in \Sigma^\pm \ \mbox{and}\ x\in \pi \},$$
we obtain  
\begin{equation}\label{eq-2}
b^\pm q^n(q^n\mp 1)+w^\pm q^{2n}=|\Sigma^\pm|\left(\frac{q^{2n+1}-1}{q-1}\right).
\end{equation}

From Equations (\ref{eq-1}) and (\ref{eq-2}), eliminating $w^\pm$, we obtain 
\begin{equation}\label{eq-2b}
q^nb^\pm=\mp \left(\frac{q^{2n+1}-1}{q-1}\right)|\Sigma^\pm|\pm q^{2n}\left(\frac{q^{2n+2}-1}{q-1}\right).
\end{equation}
From Equation (\ref{eq-2b}), we have the following

\begin{lemma}\label{lem-divide}
$|\Sigma^\pm|=q^nr^\pm$ for some $1\leq r^\pm \leq \frac{q^{2n+2}-1}{q-1}$
\end{lemma} 
\begin{proof}
Observe that $q^n$ and $\dfrac{q^{2n+1}-1}{q-1}$ are coprime. So from Equation (\ref{eq-2b}), it can been seen that $q^n$ divides both the terms on the right hand side and hence has to divide $|\Sigma^\pm|$.
\end{proof}

From now on by Lemma \ref{lem-divide}, {\it $r^\pm$ is an integer such that $|\Sigma^\pm|=q^nr^\pm$ and $1\leq r^\pm \leq \frac{q^{2n+2}-1}{q-1}$}.

\begin{lemma}\label{lem-black-fixed}
The number of black points in a hyperplane of $\Sigma^\pm$ is $\left(\frac{q^{2n}-1}{q-1}\right)(\pm q^{n+1}\mp r^\pm)$.
\end{lemma}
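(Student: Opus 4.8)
The plan is to fix a hyperplane $\pi\in\Sigma^\pm$, write $\beta$ for the number of black points it contains, and determine $\beta$ by a single double count. The count I would carry out is of the incident pairs $(x,\pi')$ with $\pi'\in\Sigma^\pm$ and $x$ a point lying on $\pi'$ but \emph{not} on $\pi$. Counting points \emph{outside} $\pi$ is the key choice: it sidesteps any need to know how black points are distributed over the codimension-$2$ subspace $\pi\cap\pi'$, which at this stage of the argument is not available and would make the approach circular.

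Counting the pairs by points first: there are $\frac{q^{2n+2}-1}{q-1}-\frac{q^{2n+1}-1}{q-1}=q^{2n+1}$ points outside $\pi$, of which $b^\pm-\beta$ are black and $q^{2n+1}-(b^\pm-\beta)$ are white. A black point outside $\pi$ lies on $q^n(q^n\mp 1)$ members of $\Sigma^\pm$ (none of which is $\pi$, since $x\notin\pi$), and a white point on $q^{2n}$ of them; so the number of pairs equals $(b^\pm-\beta)q^n(q^n\mp 1)+\big(q^{2n+1}-(b^\pm-\beta)\big)q^{2n}$. Counting the pairs by hyperplanes instead: $\pi$ itself contributes nothing, while any other $\pi'\in\Sigma^\pm$ contains exactly $\frac{q^{2n+1}-1}{q-1}-\frac{q^{2n}-1}{q-1}=q^{2n}$ points off $\pi$; hence the number of pairs is $(|\Sigma^\pm|-1)q^{2n}=(q^nr^\pm-1)q^{2n}$ by Lemma~\ref{lem-divide}.

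Equating the two expressions, the coefficient of $b^\pm-\beta$ collapses to $q^n(q^n\mp 1)-q^{2n}=\mp q^n$, so $b^\pm-\beta$, and therefore $\beta$, is determined outright once one substitutes the value $b^\pm=\pm q^n\frac{q^{2n+2}-1}{q-1}\mp r^\pm\frac{q^{2n+1}-1}{q-1}$ read off from Equation~(\ref{eq-2b}) together with $|\Sigma^\pm|=q^nr^\pm$. Collecting the $r^\pm$-terms and the $r^\pm$-free terms separately, each group telescopes to a multiple of $\frac{q^{2n}-1}{q-1}$, giving $\beta=\big(\frac{q^{2n}-1}{q-1}\big)(\pm q^{n+1}\mp r^\pm)$. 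I expect no conceptual obstacle: the only real care needed is the consistent handling of the $\pm/\mp$ symbols and verifying that the algebra simplifies to exactly the stated form; a quick check in the case $n=1$, hyperbolic, where $\Sigma^+$ consists of conic-planes, $r^+=q^2-1$, and $\beta$ should equal $q+1$, confirms the target formula.
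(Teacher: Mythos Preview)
Your proof is correct and is essentially the same double count as the paper's, just taken on the complementary side: the paper counts pairs $(x,\sigma)$ with $\sigma\in\Sigma^\pm\setminus\{\pi\}$ and $x\in\pi\cap\sigma$, i.e.\ points \emph{on} $\pi$, obtaining
\[
b_\pi^\pm\bigl(q^{2n}\mp q^n-1\bigr)+w_\pi^\pm\bigl(q^{2n}-1\bigr)=(|\Sigma^\pm|-1)\tfrac{q^{2n}-1}{q-1},
\]
whereas you count points \emph{off} $\pi$. The two equations are equivalent (their sum is exactly Equation~(\ref{eq-2})). Your stated reason for preferring the outside count---to avoid needing the black-point distribution on $\pi\cap\pi'$---is a non-issue: the inside count only uses $|\pi\cap\sigma|=\tfrac{q^{2n}-1}{q-1}$ on the hyperplane side, and on the point side it only needs whether each $x\in\pi$ is black or white, not how black points sit inside any particular codimension-$2$ slice. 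So both routes are equally self-contained at this stage of the argument.
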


\begin{proof}
Let $\pi$ be a hyperplane in $\Sigma^\pm$. Let $b_\pi^\pm$ and $w_\pi^\pm$, respectively, denote the number of black and white points in $\pi$ with respect to $\Sigma^\pm$. Then $w_\pi^\pm=\dfrac{q^{2n+1}-1}{q-1}-b_\pi^\pm$.

By counting the incident point-hyperplane pairs of the following set,
$$\{(x,\sigma)\ | \ x \ \mbox{is a point in $\PG(2n+1,q)$}, \ \sigma \in \Sigma^\pm \ , \sigma\neq \pi \ \mbox{and} \ x\in \pi\cap \sigma \},$$ we get\\ 
$$b_\pi^\pm(q^{2n}+q^n-1)+w_\pi^\pm (q^{2n}-1)=(|\Sigma^\pm|-1)\left(\frac{q^{2n}-1}{q-1}\right).$$ 

Since $w_\pi^\pm=\dfrac{q^{2n+1}-1}{q-1}-b_\pi^\pm$ and $|\Sigma^\pm|=q^nr^\pm$, it follows from the above equation that 
$$b_{\pi}^\pm=\left(\frac{q^{2n}-1}{q-1}\right) (\pm q^{n+1}\mp r^\pm).$$
\end{proof}

Now, counting in two ways the incident triples,
$$\{(x,\pi,\sigma)\ | \ x \ \mbox{is a point in $\PG(2n+1,q)$}, \ \pi,\sigma \in \Sigma^\pm, \pi\neq \sigma \ \mbox{and}\ x\in \pi\cap\sigma \},$$
we get  
\begin{equation*}
b^\pm(q^{2n}\mp q^n)(q^{2n}\mp q^n-1)+w^\pm q^{2n}(q^{2n}-1)=|\Sigma^\pm|(|\Sigma^\pm|-1)\left(\frac{q^{2n}-1}{q-1}\right).
\end{equation*}

Putting the values of $w^\pm$ and $|\Sigma^\pm|$ respectively from Equation (\ref{eq-1}) and Lemma \ref{lem-divide} in the above equation, we obtain
\begin{equation}\label{eq-3}
b^\pm=\left(\frac{q^n\pm 1}{(q-1)(2q^n\pm 1)}\right)\left[ \pm q^n(q^{2n+2}-1)\mp r^\pm (q^nr^\pm-1)\right].
\end{equation}

\begin{corollary}\label{coro-bound-r} We have
\begin{enumerate}
\item[(i)]  $q^{n+1}-q\leq r^+\leq q^{n+1}- 1$.
\item[(ii)] $q^{n+1}+1\leq r^-\leq q^{n+1}+q$.
\end{enumerate}
\end{corollary}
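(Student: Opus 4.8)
The plan is to play the two cardinality formulas already at hand against each other. Lemma~\ref{lem-black-fixed} gives that any hyperplane $\pi\in\Sigma^\pm$ (one exists, since $\Sigma^\pm$ is nonempty) contains exactly $b_\pi^\pm=\left(\frac{q^{2n}-1}{q-1}\right)(\pm q^{n+1}\mp r^\pm)$ black points, a quantity that must satisfy $0\le b_\pi^\pm\le\frac{q^{2n+1}-1}{q-1}$, the upper value being the number of points of a hyperplane of $\PG(2n+1,q)$. Equation~(\ref{eq-3}) expresses $b^\pm$ as an explicit function of $r^\pm$, and $b^\pm\ge 0$ because it counts points. I claim these three inequalities already force (i) and (ii).

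First I would extract the outer bounds from $b_\pi^\pm\le\frac{q^{2n+1}-1}{q-1}$. Put $t:=\pm q^{n+1}\mp r^\pm$, a nonnegative integer by $b_\pi^\pm\ge 0$; the inequality becomes $(q^{2n}-1)t\le q^{2n+1}-1$, and since $(q+1)(q^{2n}-1)=(q^{2n+1}-1)+(q^{2n}-q)>q^{2n+1}-1$ for $n\ge 1$, this forces $t\le q$. In the hyperbolic case $t=q^{n+1}-r^+$, so $q^{n+1}-q\le r^+\le q^{n+1}$; in the elliptic case $t=r^--q^{n+1}$, so $q^{n+1}\le r^-\le q^{n+1}+q$. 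Thus only the borderline values $r^+=q^{n+1}$ and $r^-=q^{n+1}$ remain to be excluded.

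For that I would substitute $r^\pm=q^{n+1}$ into Equation~(\ref{eq-3}); the bracket collapses to $\pm q^n(q-1)$, yielding $b^+=\frac{q^n(q^n+1)}{2q^n+1}$ and $b^-=-\frac{q^n(q^n-1)}{2q^n-1}$. The elliptic value is negative whenever $q\ge 2$ and $n\ge 1$, contradicting $b^-\ge 0$; the hyperbolic value fails to be an integer, since $\gcd(2q^n+1,\,q^n(q^n+1))=1$ while $2q^n+1>1$, contradicting that $b^+$ is a cardinality. Hence $r^+\le q^{n+1}-1$ and $r^-\ge q^{n+1}+1$, which together with the outer bounds is precisely (i) and (ii).

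Everything here is mechanical except the final step: the naive estimates only confine $r^\pm$ to a closed interval of length $q+1$, and trimming the single endpoint $r^\pm=q^{n+1}$ is the one place where an arithmetic input is needed --- a coprimality (hence non-divisibility) obstruction in the hyperbolic case and a sign flip in the elliptic case, both read off the closed form in~(\ref{eq-3}). That is the only part I expect to require more than routine computation.
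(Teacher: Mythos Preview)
Your proof is correct and follows essentially the same approach as the paper: bound $b_\pi^\pm$ between $0$ and $|\pi|$ to confine $r^\pm$ to an interval of length $q$, then plug the remaining endpoint $r^\pm=q^{n+1}$ into Equation~(\ref{eq-3}) and observe that the resulting $b^\pm$ cannot be a nonnegative integer. Your write-up is in fact more explicit than the paper's, which simply asserts the contradiction without spelling out the coprimality in the hyperbolic case or the sign in the elliptic case.
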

\begin{proof}
Let $\pi$ be a hyperplane of $\Sigma^\pm$ and let $b_\pi^\pm=|\pi\cap B^\pm|$. Then by Lemma \ref{lem-black-fixed}, we have $0\leq b_\pi^\pm=\left( \dfrac{q^{2n}-1}{q-1}\right)(\pm q^{n+1}\mp r^\pm)\leq |\pi|=\dfrac{q^{2n+1}-1}{q-1}$. It follows that $q^{n+1}-q\leq r^+\leq q^{n+1}$, and $q^{n+1}\leq r^-\leq q^{n+1}+q$.

Suppose that $r^\pm=q^{n+1}$. From Equation (\ref{eq-3}), we get that

$$b^\pm=\dfrac{(q^n\pm 1)(\pm q^{n+1}\mp q^n)}{(q-1)(2q^n\pm 1)}.$$

This gives a contradiction as $b^\pm$ is a positive integer. This proves the corollary.
\end{proof}

We need the following result from basic number theory.
\begin{lemma}\label{lem-useful}
Let $k$ be such that $1\leq k\leq q$, and assume that $n\neq 1$ in the elliptic case. Then $q^n\pm 1$ divides $k(q^{2n+1}-1)\pm q^{n+1}\mp q^n$ if and only if $k=1$.
\end{lemma}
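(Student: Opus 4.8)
The plan is to work with the quantity $N_k := k(q^{2n+1}-1) \pm q^{n+1} \mp q^n$ and reduce it modulo $q^n \pm 1$. First I would observe that $q^{n+1} = q \cdot q^n \equiv q\cdot(\mp 1) = \mp q \pmod{q^n \pm 1}$ in the ``$\pm$'' convention, and likewise $q^{2n+1} = q \cdot (q^n)^2 \equiv q \cdot 1 = q \pmod{q^n \pm 1}$, so $q^{2n+1}-1 \equiv q-1$. Substituting these congruences, $N_k \equiv k(q-1) \pm(\mp q) \mp(\mp 1) = k(q-1) - q + 1 = (k-1)(q-1) \pmod{q^n\pm 1}$. (One must be a little careful tracking the $\pm/\mp$ signs on the $q^{n+1}$ and $q^n$ terms, but the two ``halves'' of the $\mp$ and $\mp$ cancel against each other in both the hyperbolic and elliptic readings, leaving exactly $(k-1)(q-1)$.) Hence $q^n\pm 1 \mid N_k$ if and only if $q^n \pm 1 \mid (k-1)(q-1)$.

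The ``if'' direction is then immediate: $k=1$ gives $(k-1)(q-1)=0$, which is divisible by anything. For the ``only if'' direction, suppose $q^n\pm 1 \mid (k-1)(q-1)$ with $1\le k\le q$, so $0 \le k-1 \le q-1$ and therefore $0 \le (k-1)(q-1) \le (q-1)^2$. I want to conclude $(k-1)(q-1)=0$, i.e.\ $k=1$. It suffices to show $(q-1)^2 < q^n \pm 1$ for the relevant ranges of $n$ and sign, because then the only multiple of $q^n\pm 1$ in the interval $[0,(q-1)^2]$ is $0$. For $n\ge 2$: $q^n \mp 1 \ge q^2 - 1 = (q-1)(q+1) > (q-1)^2$ in the elliptic case, and $q^n + 1 > q^2 - 1 > (q-1)^2$ in the hyperbolic case, so both are fine for all $q\ge 2$. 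For $n=1$ in the hyperbolic case we need $(q-1)^2 < q+1$, which holds exactly when $q \le 2$... so this crude bound fails for $n=1$, $q \ge 3$, and I must argue more carefully there.

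The remaining case $n=1$, hyperbolic (the hypothesis explicitly excludes $n=1$ elliptic) is where the real work lies, and I expect it to be the main obstacle. Here I need: $q-1 \mid (k-1)(q-1)$ always holds, so that gives nothing; instead use that $q+1 = q^n+1$ must divide $(k-1)(q-1)$. Since $\gcd(q+1, q-1) \mid 2$, write $d = \gcd(q+1,q-1)$; then $\frac{q+1}{d} \mid k-1$. As $0 \le k-1 \le q-1 < q+1$, if $q$ is even then $d=1$ and $q+1 \mid k-1$ forces $k-1=0$; if $q$ is odd then $d=2$, $\frac{q+1}{2} \mid k-1$, and the multiples of $\frac{q+1}{2}$ in $[0,q-1]$ are $0$ and $\frac{q+1}{2}$, so I must additionally rule out $k = \frac{q+3}{2}$. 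For that borderline value, $(k-1)(q-1) = \frac{q+1}{2}(q-1) = \frac{q^2-1}{2}$, and I would check directly whether $q+1$ divides this: $\frac{q^2-1}{2} = (q+1)\cdot\frac{q-1}{2}$, which \emph{is} divisible by $q+1$ — so the congruence argument alone does \emph{not} eliminate $k=\frac{q+3}{2}$, and I would need to return to the original expression $N_k$ (not just its residue mod $q^n\pm1$) together with the constraints already derived in the paper — specifically Corollary~\ref{coro-bound-r} pins down $r^\pm$ to an interval of length $q$, and in the intended application $k$ will be forced into an even narrower window, or one invokes that $b^\pm$ from Equation~(\ref{eq-3}) must be a positive integer, as was done at the end of the proof of Corollary~\ref{coro-bound-r} to kill $r^\pm = q^{n+1}$. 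So my plan for this last case is to substitute $k = \frac{q+3}{2}$ back into the integrality condition for $b^\pm$ and derive a numerical contradiction, mirroring the $r^\pm = q^{n+1}$ argument. If instead the lemma is only ever applied with $k$ genuinely small (say $k \le 2$), the crude bound together with a one-line check for $k=2$ suffices and this subtlety evaporates.
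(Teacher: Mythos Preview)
Your modular reduction $N_k \equiv (k-1)(q-1) \pmod{q^n\pm 1}$ is correct and slicker than the paper's route, which instead exhibits explicit factorizations such as
\[
k(q^{2n+1}-1)+q^{n+1}-q^n=(kq^{n+1}-1)(q^n+1)-(k-1)(q^{n+1}+1)
\]
and then appeals to coprimality of $q^n+1$ and $q^{n+1}+1$ to deduce $q^n+1\mid k-1$. For $n\ge 2$ the two arguments are essentially equivalent; your size bound $(k-1)(q-1)\le (q-1)^2<q^n\pm 1$ finishes those cases just as well as the paper's coprimality step.

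Your unease in the case $n=1$, hyperbolic, $q$ odd is entirely warranted: the lemma as stated is \emph{false} there. Take $q=3$, $n=1$, $k=3$; then $N_k=3\cdot 26+9-3=84$ is divisible by $q^n+1=4$, yet $k\neq 1$. The paper's proof slips at precisely this point: its assertion that $q^n+1$ and $q^{n+1}+1$ are coprime is wrong for odd $q$, since $\gcd(q^n+1,q^{n+1}+1)=\gcd(q^n+1,q-1)=\gcd(2,q-1)=2$. So neither your argument nor the paper's can be completed here, because the statement itself fails. Your proposed fallback---importing extra constraints from the surrounding argument---cannot rescue the lemma as a stand-alone number-theoretic fact, but it does rescue the \emph{application}: in Lemma~\ref{r-value} one actually has the full equation $(q^{n+1}-k)(q^n+1)k=N_k$, not merely divisibility, and for $n=1$ this quadratic in $k$ visibly excludes $k=\tfrac{q+3}{2}$; alternatively the paper already covers $n=1$ hyperbolic independently via Proposition~\ref{sahu}.
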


\begin{proof}
Assume that $n\neq 1$ in the elliptic case. Observe that for $k=1$, we have $q^n+1$ divides $k(q^{2n+1}-1)+q^{n+1}-q^n=(q^{n+1}-1)(q^n+1)$. Conversely, suppose that $q^n\pm 1$ divides $k(q^{2n+1}-1)\pm q^{n+1}\mp q^n$. We separately consider two cases.

Suppose that $q^n+1$ divides $k(q^{2n+1}-1)+q^{n+1}-q^n$. Observe that $k(q^{2n+1}-1)+q^{n+1}-q^n=(kq^{n+1}-1)(q^n+1)-(k-1)(q^{n+1}+1)$. Since $q^n+1$ and $q^{n+1}+1$ are co-prime, we have $q^n+1$ divides $k-1$ for $1\leq k\leq q$, this forces $k=1$. (In this case it is true for $n=1$ also)

Similarly, suppose that $q^n-1$ divides $k(q^{2n+1}-1)-q^{n+1}+q^n$. Observe here also that $k(q^{2n+1}-1)-q^{n+1}+q^n=(kq^{n+1}+1)(q^n-1)+(k-1)(q^{n+1}-1)$. So $q^n-1$ divides $(k-1)(q^{n+1}-1)$, that is, $q^{n-1}+\cdots+q+1$ divides $(k-1)(q^n+\cdots +q+1)$ for $1\leq k\leq q$ which is possible if and only if $k=1$.    
\end{proof}


\begin{lemma}\label{r-value}
$r^\pm=q^{n+1}\mp 1$, where $n\neq 1$ in the elliptic case.
\end{lemma}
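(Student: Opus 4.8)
The plan is to pin down $r^\pm$ inside the narrow window already secured by Corollary \ref{coro-bound-r}, namely $r^+\in\{q^{n+1}-q,\dots,q^{n+1}-1\}$ and $r^-\in\{q^{n+1}+1,\dots,q^{n+1}+q\}$, by exploiting the integrality of $b^\pm$ in Equation (\ref{eq-3}). Writing $r^\pm=q^{n+1}\mp k$ for an integer $1\le k\le q$, the strategy is to substitute this into the bracketed expression $\pm q^n(q^{2n+1}-1)\mp r^\pm(q^nr^\pm-1)$ and simplify, so that the divisibility condition forced by the denominator $(q-1)(2q^n\pm 1)$ becomes a condition on $k$.

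First I would expand $r^\pm(q^nr^\pm-1)$ with $r^\pm = q^{n+1}\mp k$; this produces terms $q^{2n+2}$, the cross terms in $k$, and a $k^2 q^n$ term, and after subtracting from $q^n(q^{2n+1}-1)$ the leading $q^{2n+2}\cdot q^n$ pieces should cancel, leaving (up to the overall $\pm$) something of the shape $q^n\big(\text{linear and quadratic in }k\big)$ plus lower-order terms. The factor $q^n\pm 1$ in the numerator of (\ref{eq-3}) and the factor $2q^n\pm 1$ in the denominator are coprime to each other and to $q$, so the real content is: $2q^n\pm 1$ must divide the bracketed quantity and then, after that cancellation, the remaining expression must be divisible by $q-1$ as well — but the $q-1$ part is automatic since everything reduces mod $q-1$ to a statement about $k$ that is harmless. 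The decisive step is to recognize that the bracket, after simplification, is divisible by $2q^n\pm 1$ only when the residual factor involving $k$ vanishes modulo the relevant divisor, and that this is precisely the hypothesis of Lemma \ref{lem-useful}: $q^n\pm 1$ divides $k(q^{2n+1}-1)\pm q^{n+1}\mp q^n$ forces $k=1$.

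Concretely, I would massage Equation (\ref{eq-3}) into the form
\[
b^\pm=\frac{q^n\pm 1}{(q-1)(2q^n\pm 1)}\,\Big[\pm q^n\big(k(q^{2n+1}-1)\pm q^{n+1}\mp q^n\big)\ \mp\ (\text{correction divisible by }2q^n\pm 1)\Big],
\]
or a variant thereof, so that the requirement $b^\pm\in\mathbb{Z}$ together with $\gcd(q^n,\,(q-1)(2q^n\pm1))=1$ and $\gcd(q^n\pm1,\,2q^n\pm1)=1$ yields that $(q-1)(2q^n\pm 1)$, hence in particular $q^n\pm 1$ (after isolating the right sub-expression — here I would use that $2q^n\pm 1 \equiv \pm(q^n\pm 1)\cdot 2 \mp 1$ style manipulations to route the divisibility through $q^n\pm1$, or more cleanly reduce the whole bracket modulo $q^n\pm1$), divides $k(q^{2n+1}-1)\pm q^{n+1}\mp q^n$. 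Lemma \ref{lem-useful} then gives $k=1$, i.e. $r^\pm=q^{n+1}\mp 1$, with the elliptic $n=1$ case excluded exactly where Lemma \ref{lem-useful} needs it.

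The main obstacle I anticipate is the bookkeeping that threads the divisibility by the \emph{composite} modulus $(q-1)(2q^n\pm1)$ down to divisibility by $q^n\pm1$ specifically, since $q^n\pm1$ is not literally a factor of $(q-1)(2q^n\pm1)$; one has to compute the bracket modulo $q^n\pm1$ directly (using $q^n\equiv\mp1$) and separately check the $q-1$ and $2q^n\pm1$ constraints are either automatic or also reduce to $k\equiv$ something consistent only with $k=1$ in the range $1\le k\le q$. A secondary nuisance is keeping the $\pm/\mp$ conventions consistent through the quadratic-in-$k$ expansion; I would do the hyperbolic case in full and then state that the elliptic case follows by the sign convention, which is the paper's stated practice. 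Once $k=1$ is forced, the corollary bounds already guarantee it lies in the allowed window, so no further argument is needed.
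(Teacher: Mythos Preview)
Your plan has a genuine gap at the crucial step. You want integrality of $b^\pm$ in Equation~(\ref{eq-3}) to force divisibility by $q^n\pm 1$, so that Lemma~\ref{lem-useful} applies; but $q^n\pm 1$ is \emph{not} a factor of the denominator $(q-1)(2q^n\pm 1)$, and you acknowledge this yourself. The suggested fixes---``route the divisibility through $q^n\pm 1$'' via $2q^n\pm 1=2(q^n\pm 1)\mp 1$, or ``reduce the bracket modulo $q^n\pm 1$ directly''---do not work: knowing that $(q-1)(2q^n\pm 1)$ divides the numerator says nothing about the residue of the bracket modulo $q^n\pm 1$, so there is no reason that residue should vanish. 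As written, the argument never produces the hypothesis of Lemma~\ref{lem-useful}.

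The paper does something different and cleaner: it performs a \emph{new} double count of pairs $(x,\pi)$ with $x$ a black point and $\pi\in\Sigma^\pm$, $x\in\pi$. Using Lemma~\ref{lem-black-fixed} on one side and the definition of black on the other yields a second closed formula $b^\pm=r^\pm\cdot\dfrac{q^n\pm 1}{q-1}(\pm q^{n+1}\mp r^\pm)$. Equating this with the expression for $b^\pm$ coming from Equation~(\ref{eq-2b}) gives an exact polynomial identity in $r^\pm$; after substituting $r^\pm=q^{n+1}\mp k$ the left side carries an explicit factor $q^n\pm 1$, and that is where the divisibility $q^n\pm 1\mid k(q^{2n+1}-1)\pm q^{n+1}\mp q^n$ comes from. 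Lemma~\ref{lem-useful} then finishes. So the missing idea is this additional double count, not a manipulation of Equation~(\ref{eq-3}) alone. (Incidentally, if you had pursued the divisibility by $2q^n\pm 1$ that \emph{does} follow from integrality of (\ref{eq-3}), you would find the bracket is congruent to $(k-1)(q+k-1)/2$ modulo $2q^n\pm 1$, which for $n\ge 2$ also forces $k=1$; but that is a different argument from the one you sketched, and it bypasses Lemma~\ref{lem-useful} entirely.)
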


\begin{proof}
We know from Corollary \ref{coro-bound-r} that $r^\pm=q^{n+1}\mp k$ for some $1\leq k\leq q$. We show that $k=1$.

We know that there are $q^n(q^n\mp 1)$ hyperplanes containing a black point. By Lemma \ref{lem-black-fixed}, there are $\left(\frac{q^{2n}-1}{q-1}\right)(\pm q^{n+1}\mp r^\pm)$ black points contained in a hyperplane of $\Sigma^\pm$. Now, counting in two ways the point-hyperplane incident pairs,
$$\{(x,\pi)\ | \ x \ \mbox{is a black point in $\PG(2n+1,q)$}, \ \pi \in \Sigma^\pm \ \mbox{and}\ x\in \pi \},$$
we obtain  

$$b^\pm q^n(q^n\mp 1)=|\Sigma^\pm|\left( \frac{q^{2n}-1}{q-1} (\pm q^{n+1}\mp r^\pm) \right).$$ Putting $|\Sigma^\pm|=q^nr^\pm$ and solving for $b^\pm$, we get that 
$$b^\pm=r^\pm\left( \frac{q^n-1}{q-1}(\pm q^{n+1}\mp r^\pm)\right).$$ 

From Equation (\ref{eq-2b}) and Lemma \ref{lem-divide}, we get 
$$b^\pm=\pm q^{n}\left(\frac{q^{2n+2}-1}{q-1}\right)\mp \left(\frac{q^{2n+1}-1}{q-1}\right)r^\pm.$$

Equating the above two equations for $b^\pm$, we get 
$$r^\pm(q^n\pm 1)(\pm q^{n+1}\mp r^\pm)=\pm(q^{2n+2}-1)q^n\mp (q^{2n+1}-1)r^\pm.$$

Putting $r^\pm=q^{n+1}\mp k$ in the above equation, we obtain
$$(q^{n+1}\mp k)(q^n\pm 1)k=\pm (q^{2n+2}-1)q^n\mp (q^{2n+1}-1)(q^{n+1}\mp k)$$
$$=k(q^{2n+1}-1)\pm q^{n+1}\mp q^n.$$

The above equation tells that $q^n\pm 1$ divides $k(q^{2n+1}-1)\pm q^{n+1}\mp q^n$, which is possible by Lemma \ref{lem-useful} if and only if $k=1$. 
This proves the lemma.
\end{proof}

\begin{lemma}\label{valur-r-n=1}
Let $\Sigma^-$ be the collection of planes in $\PG(3,q)$ satisfying property (P1) of Proposition \ref{main-prop}. Then $r^-=q^2+q$ or $q^2+1$.
\end{lemma}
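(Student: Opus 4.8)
The plan is to rerun the computation from the proof of Lemma~\ref{r-value} essentially verbatim. Every step of that argument is valid for arbitrary $n$ and $q$; the restriction $n\neq 1$ (in the elliptic case) enters only at the very last line, where Lemma~\ref{lem-useful} is used to turn a polynomial identity in $r^-$ into the single conclusion $k=1$. For $n=1$ I would instead keep that identity and solve it directly, and it turns out to leave exactly two possibilities.

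Concretely: by Corollary~\ref{coro-bound-r}(ii) with $n=1$ I may write $r^- = q^2 + k$ for an integer $k$ with $1\le k\le q$, and the aim is to show $k\in\{1,q\}$. Exactly as in Lemma~\ref{r-value}, I count the incident pairs $(x,\pi)$ with $x$ a black point of $\PG(3,q)$ and $\pi\in\Sigma^-$ a hyperplane through $x$, in two ways: using property (P1) (each black point lies on $q^2+q$ hyperplanes of $\Sigma^-$), Lemma~\ref{lem-divide} ($|\Sigma^-| = q\,r^-$), and Lemma~\ref{lem-black-fixed} (each hyperplane of $\Sigma^-$ contains $(q+1)(r^--q^2)$ black points). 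This yields one expression for $b^-$, namely $b^- = r^-(r^--q^2)$. A second expression, $b^- = (q^2+q+1)\,r^- - q(q^3+q^2+q+1)$, comes from Equation~(\ref{eq-2b}) together with $|\Sigma^-| = q\,r^-$. Equating the two and substituting $r^- = q^2+k$ gives, after routine expansion and factoring, $(k-1)(k-q)=0$.

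Since $q\ge 2$, this forces $k=1$ or $k=q$, i.e.\ $r^- = q^2+1$ or $r^- = q^2+q$, as claimed. There is no genuine obstacle in this argument; the only point worth recording is why the $n=1$ elliptic case must be separated from Lemma~\ref{r-value}. In the elliptic branch of the proof of Lemma~\ref{lem-useful} one reduces to ``$q^{n-1}+\cdots+q+1$ divides $(k-1)(q^{n}+\cdots+q+1)$'', and for $n=1$ the left-hand side equals $1$, so the divisibility is vacuous; hence for $n=1$ one cannot replace the identity above by a congruence and must instead factor the quadratic in $k$ directly---which is precisely why a second value of $r^-$ survives.
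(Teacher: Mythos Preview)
Your argument is correct and is essentially the paper's own proof: the paper likewise reuses the identity derived in Lemma~\ref{r-value}, specializes it to $n=1$ in the elliptic case, and factors the resulting quadratic in $k$ as $(k-1)(k-q)=0$. Your added explanation of why Lemma~\ref{lem-useful} fails for $n=1$ is accurate and a nice clarification, but otherwise the two proofs coincide.
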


\begin{proof}
Let $\Sigma^-$ be the collection of planes satisfying property (P1) of Proposition \ref{main-prop}. Let $r^-=q^2+k$. In the proof of Lemma \ref{r-value}, we have $(q^2+k)(q-1)k=k(q^3-1)-q^2+1$, that is, $(q-k)(k-1)=0$. So, $k=1$ or $k=q$. If $k=q$, then $r^-=q^2+q$ and if $k=1$, then $r^-=q^2+1$
\end{proof}

From Lemma \ref{valur-r-n=1}, {\it for the rest of this section we assume that $r-=q^2+1$ in the elliptic case when $n=1$}, so that $r=q^{n+1}\mp 1$ for all $n\geq 1$. We separately consider the case $n=1$ and $r^-=q^2+q$ in the last part of this paper. The following follows from Lemmas \ref{lem-divide}, \ref{lem-black-fixed} and \ref{r-value}.

\begin{corollary}\label{sizes}
The following hold.
\begin{enumerate}
\item[(i)] $|\Sigma^\pm|=q^n(q^{n+1}\mp 1).$
\item[(ii)] The number of black points $b^\pm=\dfrac{(q^{n+1}\mp 1)(q^n\pm 1)}{q-1}$.
\item[(iii)] The number of black points $b_\pi^\pm$ in a hyperplane of $\pi$ of $\Sigma^\pm$ is $\dfrac{q^{2n}-1}{q-1}$.
\end{enumerate}
\end{corollary}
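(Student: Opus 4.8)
\section*{Proof proposal for Corollary~\ref{sizes}}

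The whole statement is a substitution exercise. By Lemma~\ref{r-value} we have $r^\pm = q^{n+1}\mp 1$ (in the exceptional $n=1$ elliptic case this is the standing assumption $r^- = q^2+1$ adopted after Lemma~\ref{valur-r-n=1}, the branch $r^-=q^2+q$ having been explicitly set aside there), and the plan is simply to feed this value back into the identities already derived in Lemmas~\ref{lem-divide} and \ref{lem-black-fixed} and into Equation~(\ref{eq-2b}).

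For part (i), $|\Sigma^\pm| = q^n r^\pm$ by Lemma~\ref{lem-divide}, hence $|\Sigma^\pm| = q^n(q^{n+1}\mp 1)$. For part (iii), Lemma~\ref{lem-black-fixed} gives $b_\pi^\pm = \bigl(\frac{q^{2n}-1}{q-1}\bigr)(\pm q^{n+1}\mp r^\pm)$; the one thing to notice is that with $r^\pm=q^{n+1}\mp 1$ one has $\pm q^{n+1}\mp r^\pm = \pm q^{n+1}\mp(q^{n+1}\mp 1) = 1$ in both the hyperbolic and the elliptic reading of the signs, whence $b_\pi^\pm = \frac{q^{2n}-1}{q-1}$.

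For part (ii), I would reuse the point--hyperplane count over black points from the proof of Lemma~\ref{r-value}, namely $b^\pm\, q^n(q^n\mp 1) = |\Sigma^\pm|\, b_\pi^\pm$. Substituting $|\Sigma^\pm| = q^n(q^{n+1}\mp 1)$ from (i) and $b_\pi^\pm = \frac{q^{2n}-1}{q-1}$ from (iii) and cancelling the common factor $q^n$ yields $b^\pm = \frac{(q^{n+1}\mp 1)(q^{2n}-1)}{(q^n\mp 1)(q-1)}$; since $q^{2n}-1 = (q^n\mp 1)(q^n\pm 1)$, this collapses to $b^\pm = \frac{(q^{n+1}\mp 1)(q^n\pm 1)}{q-1}$. (Equivalently one can substitute $|\Sigma^\pm|$ directly into Equation~(\ref{eq-2b}), divide by $q^n$, and factor the numerator.) There is no genuine obstacle here: the statement is pure bookkeeping, and the only point requiring care is keeping the $\pm/\mp$ conventions consistent through the cancellations, together with the verification that the $n=1$ elliptic case being treated is the one with $r^-=q^2+1$, as fixed in the paragraph following Lemma~\ref{valur-r-n=1}.
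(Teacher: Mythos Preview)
Your proposal is correct and matches the paper's approach: the paper simply states that the corollary follows from Lemmas~\ref{lem-divide}, \ref{lem-black-fixed} and \ref{r-value}, and you have spelled out precisely those substitutions (together with the standing assumption $r^-=q^2+1$ in the $n=1$ elliptic branch). Your derivation of (ii) via the black-point count from Lemma~\ref{r-value} is a perfectly valid route, equivalent to plugging into Equation~(\ref{eq-2b}) as you note.
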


\begin{lemma}\label{black-tangent}
Any hyperplane of $\PG(2n+1,q)$ not in $\Sigma^\pm$ contains $\dfrac{q^{2n}\pm q^{n+1}\mp q^n-1}{q-1}$ black points. 
\end{lemma}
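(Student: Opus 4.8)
The plan is to mimic the double-counting argument already used for hyperplanes inside $\Sigma^\pm$ (Lemma \ref{lem-black-fixed}), but now applied to a hyperplane $\pi \notin \Sigma^\pm$. Write $b_\pi^\pm$ and $w_\pi^\pm = \frac{q^{2n+1}-1}{q-1} - b_\pi^\pm$ for the number of black and white points of $\pi$. Every point of $\PG(2n+1,q)$ lies on $q^n(q^n\mp 1)$ hyperplanes of $\Sigma^\pm$ if black and on $q^{2n}$ if white (property (P1)); all $|\Sigma^\pm|$ of these hyperplanes are distinct from $\pi$ since $\pi\notin\Sigma^\pm$. So counting incident pairs $\{(x,\sigma) : x \in \pi, \ \sigma\in\Sigma^\pm, \ x\in\sigma\}$ in two ways, and using that a hyperplane $\sigma\ne\pi$ meets $\pi$ in $\frac{q^{2n}-1}{q-1}$ points, gives the single linear equation
\[
b_\pi^\pm \, q^n(q^n\mp 1) + w_\pi^\pm \, q^{2n} = |\Sigma^\pm|\left(\frac{q^{2n}-1}{q-1}\right).
\]

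Next I would substitute $w_\pi^\pm = \frac{q^{2n+1}-1}{q-1} - b_\pi^\pm$ and the now-known value $|\Sigma^\pm| = q^n(q^{n+1}\mp 1)$ from Corollary \ref{sizes}(i). This turns the equation into one linear equation in the single unknown $b_\pi^\pm$, whose coefficient of $b_\pi^\pm$ is $q^n(q^n\mp 1) - q^{2n} = \mp q^n$, which is nonzero, so $b_\pi^\pm$ is uniquely determined. Solving and simplifying should yield exactly $b_\pi^\pm = \dfrac{q^{2n}\pm q^{n+1}\mp q^n-1}{q-1}$.

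The only genuine content here is that the incidence equation has to hold with $|\Sigma^\pm|$ being the global value, which is legitimate precisely because $\pi\notin\Sigma^\pm$ ensures we are not accidentally over- or under-counting the hyperplane $\pi$ itself; this is the one place to be careful, and it is exactly why the constant differs from the $\Sigma^\pm$-hyperplane case of Lemma \ref{lem-black-fixed}, where $|\Sigma^\pm|-1$ appeared. Beyond that the proof is a routine algebraic simplification; I expect no real obstacle, though one should double-check the arithmetic by verifying that $b_\pi^\pm + w_\pi^\pm = \frac{q^{2n+1}-1}{q-1}$ comes out consistently, and (as a sanity check against the classical model) that $\frac{q^{2n}\pm q^{n+1}\mp q^n-1}{q-1}$ equals $|p\mathcal{Q}^\pm(2n-1,q)|$, the size of a tangent-hyperplane section of $\mathcal{Q}^\pm(2n+1,q)$.
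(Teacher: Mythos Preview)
Your proposal is correct and is essentially the same double-counting argument as the paper's, with one cosmetic difference: the paper sums over hyperplanes $\sigma\notin\Sigma^\pm$, $\sigma\neq\pi$ (so the right-hand side carries $\frac{q^{2n+2}-1}{q-1}-|\Sigma^\pm|-1$), whereas you sum over $\sigma\in\Sigma^\pm$ and exploit $\pi\notin\Sigma^\pm$ to avoid the ``$-1$'' correction. The two counts are complementary and lead to the same linear equation in $b_\pi^\pm$; your version is arguably the cleaner of the two.
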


\begin{proof}
We prove the lemma with a similar argument as that of Lemma \ref{lem-black-fixed}. Let $\pi$ be a hyperplane of $\PG(3,q)$ not in $\Sigma^\pm$. Let $b_\pi^\pm$ and $w_\pi^\pm$ denote the number of black and white points, respectively, in $\pi$. Then $w_\pi^\pm=\dfrac{q^{2n+1}-1}{q-1}-b_\pi^\pm$.

By counting the incident point-hyperplane pairs of the following set,
$$\{(x,\sigma)\ | \ x \ \mbox{is a point in $\PG(2n+1,q)$}, \ \sigma \notin \Sigma^\pm \ , \sigma\neq \pi \ \mbox{and} \ x\in \pi\cap \sigma \},$$ we get\\ 
$$b_\pi^\pm\left(\dfrac{q^{2n+1}-1}{q-1}-(q^{2n}\mp q^n)-1\right)+ \left(\dfrac{q^{2n+1}-1}{q-1}-b_\pi^\pm\right) \left( \dfrac{q^{2n+1}-1}{q-1}-q^{2n}-1\right)$$ 
$$=\left( \dfrac{q^{2n+2}-1}{q-1}-|\Sigma^\pm|-1\right)\left( \dfrac{q^{2n}-1}{q-1}\right).$$

Since by Corollary \ref{sizes}, $|\Sigma^\pm|=q^n(q^{n+1}\mp 1)$, the above equation simplifies to give 
$$b_\pi^\pm= \dfrac{q^{2n}\pm q^{n+1}\mp q^n-1}{q-1}.$$ This proves the lemma.
\end{proof}

Corollary \ref{sizes}(iii) together with Lemma \ref{black-tangent} proves Proposition \ref{main-prop}.

\subsection{Proof of Theorem \ref{main}}
Let $\Sigma^\pm$ be a nonempty set of hyperplanes satisfying properties (P1) and (P2) of Theorem \ref{main}. Recall that $B^\pm$ is the set of black points in $\PG(2n+1,q)$ with respect to $\Sigma^\pm$.
 
\begin{lemma}\label{lem-p2}
The number of black points in a codimension $2$ subspace of $\PG(2n+1)$ is either $c_1^\pm,c_2^\pm,c_3^\pm$ or $c_4^\pm$.
\end{lemma}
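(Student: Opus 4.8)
The plan is to exploit the by-now-familiar double-counting machinery, combined with property (P2) and the numerical data already assembled in Corollary \ref{sizes} and Lemma \ref{black-tangent}. Fix a codimension $2$ subspace $\Pi$ of $\PG(2n+1,q)$ and let $t^\pm = |\Pi \cap B^\pm|$ be its number of black points. The key geometric fact is that $\Pi$ is contained in exactly $q+1$ hyperplanes of $\PG(2n+1,q)$, and the $q+1$ point sets $\pi \setminus \Pi$ (for $\pi \supseteq \Pi$) partition $\PG(2n+1,q) \setminus \Pi$. Let $s^\pm$ denote the number of these $q+1$ hyperplanes that lie in $\Sigma^\pm$. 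By Corollary \ref{sizes}(iii) each such hyperplane contains exactly $\frac{q^{2n}-1}{q-1}$ black points, and by Lemma \ref{black-tangent} each of the remaining $q+1-s^\pm$ hyperplanes contains exactly $\frac{q^{2n}\pm q^{n+1}\mp q^n-1}{q-1}$ black points. Counting the total number of black points of $\PG(2n+1,q)$ by summing over the $q+1$ hyperplanes through $\Pi$ and subtracting the overcount of $\Pi$'s black points gives
$$s^\pm\left(\frac{q^{2n}-1}{q-1} - t^\pm\right) + (q+1-s^\pm)\left(\frac{q^{2n}\pm q^{n+1}\mp q^n-1}{q-1} - t^\pm\right) = b^\pm - t^\pm,$$
with $b^\pm = \frac{(q^{n+1}\mp 1)(q^n\pm 1)}{q-1}$ from Corollary \ref{sizes}(ii).

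Next I would solve this linear relation for $t^\pm$ in terms of $s^\pm$. The coefficient of $t^\pm$ on the left collapses to $-(q+1)$ (since the $q+1$ hyperplanes partition the complement of $\Pi$), and on the right it is $-1$, so in fact $t^\pm$ gets expressed as an explicit affine function of $s^\pm$: something of the shape $t^\pm = A - B\, s^\pm$ for constants $A,B$ depending only on $q,n$, after absorbing the $q\,t^\pm$ discrepancy correctly — the clean way is to first compute the numerical identity obtained by setting $t^\pm=0$ is impossible, so instead I rearrange to isolate $(q+1-1)t^\pm = q\,t^\pm$ and read off $t^\pm$ as a linear function of $s^\pm$ with the explicit slope $\frac{q^{n+1}\mp q^n}{q-1}$ coming from the difference $\frac{q^{2n}-1}{q-1} - \frac{q^{2n}\pm q^{n+1}\mp q^n-1}{q-1}$. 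The upshot: $t^\pm$ is determined by $s^\pm$, and distinct values of $s^\pm$ give distinct values of $t^\pm$.

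Now I invoke property (P2): if $\Pi$ is contained in at least one hyperplane of $\Sigma^\pm$ (i.e.\ $s^\pm \geq 1$), then $s^\pm \geq q-1$, so $s^\pm \in \{q-1,q,q+1\}$; and if $\Pi$ lies in no hyperplane of $\Sigma^\pm$ then $s^\pm=0$. So $s^\pm$ takes only four possible values, $\{0,q-1,q,q+1\}$, and hence $t^\pm$ takes only four possible values. It remains to check that these four values are exactly $c_1^\pm, c_2^\pm, c_3^\pm, c_4^\pm$. This is a direct substitution: plug $s^\pm = q+1, q, q-1, 0$ into the affine formula for $t^\pm$ and verify each output matches one of the four listed intersection numbers (the matching comes out as $s^\pm = q+1 \leftrightarrow c_1^\pm$, $s^\pm=q\leftrightarrow c_2^\pm$, $s^\pm=q-1\leftrightarrow c_3^\pm$, $s^\pm=0\leftrightarrow c_4^\pm$, which one can cross-check against Proposition \ref{prop-verify}).

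The main obstacle is purely bookkeeping: getting the two distinct ``black points per hyperplane'' constants right (one from Corollary \ref{sizes}(iii), one from Lemma \ref{black-tangent}), handling the $t^\pm$-overcount cleanly so the slope $B$ comes out correctly, and then carrying out the four substitutions without sign errors in the $\pm/\mp$ convention — there is a real risk of mismatching which $s^\pm$-value corresponds to which $c_i^\pm$, so I would double-check at least one case against the known geometry of $\mathcal{Q}^\pm(2n+1,q)$. Once the arithmetic is pinned down, the logical content is just ``(P2) restricts $s^\pm$ to four values $\Rightarrow$ $t^\pm$ to four values $\Rightarrow$ they are the $c_i^\pm$.'' No case analysis on $n$ or $q$ is needed here beyond what is already in force for this section.
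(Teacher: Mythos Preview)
Your proposal is correct and follows essentially the same approach as the paper's own proof: both set up the identical counting identity over the $q+1$ hyperplanes through $\Pi$, use Corollary~\ref{sizes}(iii) and Lemma~\ref{black-tangent} for the two hyperplane types, invoke (P2) to restrict $s^\pm\in\{0,q-1,q,q+1\}$, and read off the four values of $t^\pm$. Your explicit matching $s^\pm=q+1,q,q-1,0\leftrightarrow c_1^\pm,c_2^\pm,c_3^\pm,c_4^\pm$ is in fact slightly more detailed than what the paper records.
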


\begin{proof}
Let $\Pi^\pm$ be a codimension 2 subspace of $\PG(2n+1)$ and let $m^\pm$ be the number of black points contained in $\Pi^\pm$. Let $s^\pm$ be the number of hyperplanes of $\Sigma^\pm$ containing $\Pi^\pm$. Note that the total number of points of $B^\pm$ in $\PG(2n+1,q)$ can be obtained by taking all the hyperplanes containing $\Pi^\pm$ and then counting points of $B^\pm$ in each such hyperplanes. We apply a similar argument as that of the proof of Proposition \ref{prop-verify}. So, for any hyperplane $\pi$ of $\Sigma^\pm$ and any hyperplane $\alpha$ not in $\Sigma^\pm$, applying Corollary \ref{sizes} (iii) and Lemma \ref{black-tangent}, we obtain the following

$$s^\pm(|\pi\cap B^\pm|-m^\pm)+(q+1-s^\pm)\left(|\alpha\cap B^\pm|-m^\pm\right)=b^\pm-m^\pm.$$ Since by property (P2) of Theorem \ref{main}, there are four choices for $s^\pm$, namely $0,q-1,q$ and $q+1$. Using Corollary \ref{sizes}, Lemma \ref{black-tangent} and solving the equation in $s^\pm$ for $s^\pm=0,q-1,q,q+1$, we obtain the number of black points in $\Pi^\pm$ is either $c_1^\pm,c_2^\pm,c_3^\pm$ or $c_4^\pm$ (see section 2.1).  
\end{proof}

{\it For the next result assume that $r^-=q^2+1$ in the elliptic case of $\PG(3,q)$}.

\begin{corollary}\label{main-a-b}
The set $\Sigma^\pm$ is the set of all hyperplanesplanes of $\PG(2n+1,q), q>2$,  $q$ is odd for $\Sigma^-$ in $\PG(3,q)$, meeting a non-singular quadric $\mathcal{Q}^\pm(2n+1)$ in a parabolic quadric. For $q$ is even and $n=1$, the collection $\Sigma^-$ is the set of planes meeting an ovoid in $\PG(3,q)$.
\end{corollary}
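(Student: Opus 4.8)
The plan is to identify the black point set $B^\pm$ with the point set of a non-singular quadric by means of Proposition \ref{de-sci}, and then to recover $\Sigma^\pm$ as its family of parabolic hyperplanes. First I would collect the intersection data of $B^\pm$ already established: by Proposition \ref{main-prop} together with Corollary \ref{sizes}(ii), $B^\pm$ is a quasi-quadric of $\PG(2n+1,q)$ with $|B^\pm|=\frac{(q^{n+1}\mp 1)(q^n\pm 1)}{q-1}=|\mathcal{Q}^\pm(2n+1,q)|$, where by Corollary \ref{sizes}(iii) the hyperplanes of $\Sigma^\pm$ account for the intersection number $h_1^\pm=\frac{q^{2n}-1}{q-1}$ and by Lemma \ref{black-tangent} the hyperplanes outside $\Sigma^\pm$ account for $\frac{q^{2n}\pm q^{n+1}\mp q^n-1}{q-1}$, which a one-line computation identifies with $h_2^\pm$; moreover by Lemma \ref{lem-p2} every subspace of codimension $2$ meets $B^\pm$ in one of $c_1^\pm,c_2^\pm,c_3^\pm,c_4^\pm$. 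Thus $B^\pm$ has exactly the hyperplane and codimension-$2$ intersection numbers of $\mathcal{Q}^\pm(2n+1,q)$; for $n\geq 2$ one has $2n+1\geq 5>4$ and $q>2$, so Proposition \ref{de-sci} applies and gives $B^\pm=\mathcal{Q}^\pm(2n+1,q)$ as point sets.

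With this identification in hand, a hyperplane $\pi$ lies in $\Sigma^\pm$ if and only if $|\pi\cap B^\pm|=h_1^\pm$, since hyperplanes of $\Sigma^\pm$ give $h_1^\pm$ (Corollary \ref{sizes}(iii)) and hyperplanes outside $\Sigma^\pm$ give $h_2^\pm\neq h_1^\pm$ (Lemma \ref{black-tangent}). A hyperplane of $\PG(2n+1,q)$ meets $\mathcal{Q}^\pm(2n+1,q)$ in $h_1^\pm=|\mathcal{Q}(2n,q)|$ points precisely when the section is a parabolic quadric, so $\Sigma^\pm$ is the set of parabolic hyperplanes. This handles $n\geq 2$, $q>2$.

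For $n=1$ the projective space $\PG(3,q)$ is outside the range of Proposition \ref{de-sci} and is treated directly. In the hyperbolic case, property (P2) of Theorem \ref{main} together with Lemma \ref{lem-p2} shows that every line lies in $0$, $q-1$, $q$ or $q+1$ members of $\Sigma^+$, so $\Sigma^+$ satisfies the hypotheses of Proposition \ref{sahu} and is the set of planes meeting a hyperbolic quadric in an irreducible conic, i.e.\ the parabolic hyperplanes. In the elliptic case with $r^-=q^2+1$, Corollary \ref{sizes} and Lemma \ref{black-tangent} give $|B^-|=q^2+1$ with every plane meeting $B^-$ in $1$ or $q+1$ points; were a line to carry $t\geq 3$ points of $B^-$, then each of the $q+1$ planes through it would meet $B^-$ in $q+1$ points, forcing $q^2+1=(q+1)^2-tq$ and hence $t=2$, a contradiction, so $B^-$ is a cap of size $q^2+1$, that is, an ovoid, and $\Sigma^-$ is the set of planes meeting it in an oval; for $q$ odd this ovoid is an elliptic quadric by Barlotti's theorem, so $\Sigma^-$ is again the family of parabolic hyperplanes of $\mathcal{Q}^-(3,q)$, while for $q$ even only the oval description is available.

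The step I expect to be most delicate is the $n=1$ elliptic analysis rather than the higher-dimensional reduction, since Proposition \ref{de-sci} cannot be quoted in $\PG(3,q)$ and one must instead run the ad hoc cap argument (the complementary branch $n=1$, $r^-=q^2+q$ being deferred to the end of the paper). A secondary point requiring care is verifying that the intersection sizes $\frac{q^{2n}\pm q^{n+1}\mp q^n-1}{q-1}$ and the $c_i^\pm$ genuinely coincide with the intersection numbers of $\mathcal{Q}^\pm(2n+1,q)$ — together with the matching frequencies — so that Proposition \ref{de-sci} applies as stated.
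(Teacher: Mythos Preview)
Your proposal is correct and follows essentially the same approach as the paper: for $n\geq 2$, $q>2$ you combine Proposition~\ref{main-prop} with Lemma~\ref{lem-p2} and invoke Proposition~\ref{de-sci}, and for $n=1$ hyperbolic you reduce to Proposition~\ref{sahu}. The only minor difference is in the $n=1$ elliptic case: the paper simply cites Lemma~\ref{lem-p2} (whose codimension-$2$ values for $n=1$ collapse to $\{0,1,2\}$) to conclude $B^-$ is a cap of size $q^2+1$, whereas you supply an independent plane-counting argument to reach the same conclusion; your reference to Lemma~\ref{lem-p2} in the hyperbolic $n=1$ step is unnecessary, since property~(P2) alone already gives the line-numbers $0,q-1,q,q+1$ required by Proposition~\ref{sahu}.
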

\begin{proof}

Let $n=1$. By Proposition \ref{sahu}, $\Sigma^+$ is the set of parabolic planes of $\PG(3,q)$ with respect to the hyperbolic quadric $B^+$. By Lemma \ref{lem-p2}, the set of black points $B^-$ forms an ovoid of $\PG(3,q)$. For $q$ odd, by Barloti \cite{Bar}, $B^-$ will be an elliptic quadric and  $\Sigma^-$ is the set of parabolic planes of $\PG(3,q)$ with respect to the elliptic quadric $B^-$.

For $n\geq 2$ and $q>2$, the corollary follows from Proposition \ref{main-prop} and, Proposition \ref{de-sci}. 
\end{proof}


Now {\it for the rest of the paper assume that $n=1$ and $r^-=q^2+q$}.

Let $\Sigma^-$ be the collection of planes in $\PG(3,q)$ satisfying properties (P1) and (P2) of Theorem \ref{main}. Recall that $B^-$ is the set of black points in $\PG(3,q)$. Then the following follows from section 2.2.

\begin{lemma}\label{sizes-n=1}
The following hold.
\begin{enumerate}
\item[(i)] $|\Sigma^-|=q^3+q^2.$
\item[(ii)] The number of black points $b^-=|B^-|=q^3+q^2$.
\item[(iii)] The number of black points $|\pi\cap B^-|$ in a hyperplane $\pi$ of $\PG(3,q)$ is $q^2+q$ or $q^2$ depending on whether $\pi\in \Sigma^-$ or not.  
\end{enumerate}
\end{lemma}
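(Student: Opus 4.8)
The plan is to obtain Lemma \ref{sizes-n=1} by specializing the computations of Section 2.2 to the parameters $n=1$ and $r^-=q^2+q$, re-deriving from scratch only those of the earlier statements that were established under the extra hypothesis $r^\pm=q^{n+1}\mp 1$.

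Part (i) is immediate from Lemma \ref{lem-divide}: $|\Sigma^-|=q^nr^-=q(q^2+q)=q^3+q^2$. For part (ii) I would use the closed form for $b^\pm$ derived inside the proof of Lemma \ref{r-value}, namely $b^\pm=r^\pm\left(\frac{q^n-1}{q-1}(\pm q^{n+1}\mp r^\pm)\right)$; this was obtained by a point--hyperplane count that does not presuppose any particular value of $r^\pm$, so it is available here. With $n=1$ and $r^-=q^2+q$ the factor $\frac{q^n-1}{q-1}$ equals $1$ and the bracket equals $r^--q^2=q$, giving $b^-=(q^2+q)q=q^3+q^2$. One can cross-check this by substituting $r^-=q^2+q$ directly into Equation (\ref{eq-3}): the factor $2q-1$ in the denominator divides the resulting numerator $q^2(2q-1)(q+1)$, again leaving $q^3+q^2$.

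For part (iii), the value for $\pi\in\Sigma^-$ is read off from Lemma \ref{lem-black-fixed}, which likewise used nothing about $r^\pm$: it yields $b_\pi^-=\frac{q^{2n}-1}{q-1}(\pm q^{n+1}\mp r^\pm)=(q+1)(r^--q^2)=(q+1)q=q^2+q$. The only genuine work is the case $\pi\notin\Sigma^-$, where Lemma \ref{black-tangent} cannot be quoted because its proof passed through Corollary \ref{sizes} and hence through the value $|\Sigma^-|=q(q^2-1)$, which is false in the present regime. I would re-run the double count of that lemma with the corrected data $|\Sigma^-|=q^3+q^2$ and $b^-=q^3+q^2$: a black point of $\PG(3,q)$ lies in $q^2+q$ planes of $\Sigma^-$ and therefore in exactly $(q^2+q+1)-(q^2+q)=1$ plane outside $\Sigma^-$; a white point lies in $q^2$ planes of $\Sigma^-$ and therefore in $q+1$ planes outside; and the total number of planes outside $\Sigma^-$ is $(q^3+q^2+q+1)-(q^3+q^2)=q+1$. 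Counting the incident pairs $\{(x,\sigma)\mid x\in\PG(3,q),\ \sigma\notin\Sigma^-,\ \sigma\neq\pi,\ x\in\pi\cap\sigma\}$ in two ways, using $|\pi\cap\sigma|=q+1$ for every $\sigma\neq\pi$, then produces a single linear equation in $b_\pi^-$ whose solution is $b_\pi^-=q^2$. (As a sanity check, this is consistent with the fact that the $q+1$ planes outside $\Sigma^-$ partition $B^-$, since each black point lies in exactly one of them, and $(q+1)q^2=q^3+q^2=b^-$.)

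In short, the statement is pure specialization, so I expect no real difficulty; the one point demanding attention — and the only place the argument is not a one-line substitution — is that Corollary \ref{sizes} and Lemma \ref{black-tangent} are unavailable in this regime, so the analogue of Lemma \ref{black-tangent} for planes $\pi\notin\Sigma^-$ has to be re-proved as above rather than cited.
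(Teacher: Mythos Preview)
Your proposal is correct and follows essentially the same route as the paper: (i) from Lemma~\ref{lem-divide}, (iii) for $\pi\in\Sigma^-$ from Lemma~\ref{lem-black-fixed}, and (iii) for $\pi\notin\Sigma^-$ by re-running the double count of Lemma~\ref{black-tangent} with the new value of $|\Sigma^-|$, exactly as the paper indicates. The only cosmetic difference is in part (ii): the paper invokes Equation~(\ref{eq-2b}) directly, whereas you substitute into the formula $b^\pm=r^\pm\bigl(\tfrac{q^n-1}{q-1}(\pm q^{n+1}\mp r^\pm)\bigr)$ from the proof of Lemma~\ref{r-value} (or into Equation~(\ref{eq-3})); all three are equivalent specializations of the same counts and none presupposes a particular value of $r^\pm$.
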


\begin{proof}
Since $r^-=q^2+q$, (i) follows from Lemma \ref{lem-divide} and hence (ii) follows from Equation (\ref{eq-2b}). Let $\pi$ be a plane of $\PG(3,q)$. If $\pi\in \Sigma^-$, then $|\pi\cap B^-|=q^2+q$, by Lemma \ref{lem-black-fixed}. If $\pi\notin \Sigma^-$, by a similar proof as that of Lemma \ref{black-tangent}, it follows that $|\pi\cap B^-|=q^2$.
\end{proof}

\noindent We use the following version of Bose and Burton \cite[Theorem 1]{BB} for $k=3$ in the proof of the next result.
\begin{proposition}\label{BB}
If $A$ is a set of points meeting all the lines of $\PG(2,q)$, then $|A|\geq q+1$, and equality holds if and only of $A$ is a line of $\PG(2,q)$.
\end{proposition}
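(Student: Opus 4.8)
The plan is to prove the two assertions of Proposition \ref{BB} separately: first the lower bound $|A|\ge q+1$, and then the characterization of the equality case, both by elementary incidence counting in $\PG(2,q)$.

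For the lower bound I would argue as follows. If $A$ is the whole of $\PG(2,q)$, then $|A|=q^2+q+1\ge q+1$ and there is nothing to show, so I may assume there is a point $P\notin A$. There are exactly $q+1$ lines through $P$, and together they cover every point of $\PG(2,q)$; since $A$ meets each of these lines, $A$ contains at least one point on each. Two distinct lines through $P$ intersect only in $P$, and $P\notin A$, so the points so chosen are pairwise distinct, which gives $|A|\ge q+1$.

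For the equality statement, one direction is immediate: every line $\ell$ of $\PG(2,q)$ has exactly $q+1$ points and meets every line of $\PG(2,q)$, since two lines of a projective plane always intersect. For the converse, suppose $|A|=q+1$, that $A$ meets every line, but that $A$ is not a line. I would pick $Q_1,Q_2\in A$ and set $\ell=Q_1Q_2$; since $A\not\subseteq\ell$, there is $Q_3\in A\setminus\ell$, so $|A\cap\ell|\le q$ and hence $\ell$ contains a point $P\notin A$. Now I count the lines through $P$: besides $\ell$ itself there are $q$ of them, each of which must meet $A$, necessarily in a point of $A\setminus\ell$; but $|A\setminus\ell|=(q+1)-|A\cap\ell|\le q-1$, because $Q_1,Q_2\in A\cap\ell$, and each point of $A\setminus\ell$ lies on exactly one line through $P$. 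Hence at most $q-1$ of the $q$ lines through $P$ other than $\ell$ meet $A$, contradicting the hypothesis that $A$ meets every line.

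The only subtle point — the step I would flag as the "main obstacle" — is organizing the equality case so that the double count through $P$ is forced: one must produce a point of $\PG(2,q)\setminus A$ lying on a secant line of $A$, and this is exactly where the assumption that $A$ is not a line (equivalently, that $A$ has a point off the secant $\ell$) is used. Everything else reduces to counting lines through a fixed external point.
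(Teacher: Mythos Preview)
Your argument is correct in both parts. The lower bound via the pencil of lines through an external point $P$ is the standard one, and your treatment of the equality case is clean: the only place needing a word of justification is the claim $A\not\subseteq\ell$, but this is immediate since $|A|=q+1=|\ell|$ would force $A=\ell$, contrary to assumption. The counting through $P\in\ell\setminus A$ then goes through exactly as you describe.

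As for comparison with the paper: there is nothing to compare, because the paper does not prove Proposition~\ref{BB} at all. It is quoted as the special case $k=3$ of the Bose--Burton theorem \cite[Theorem~1]{BB} and used as a black box in the proof of Theorem~\ref{main-n=1}. Your self-contained elementary proof is therefore a genuine addition rather than a reworking of anything in the text; it has the advantage of making the paper independent of the cited reference for this particular fact, at the cost of a few extra lines.
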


\begin{theorem}\label{main-n=1}
The set of black points $B^-$ is the complement of a line $l$ in $\PG(3,q)$ and $\Sigma^-$ is the set planes of $\PG(3,q)$ meeting $l$ in a unique point. 
\end{theorem}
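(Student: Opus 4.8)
The plan is to show first that every plane of $\PG(3,q)$ not in $\Sigma^-$ meets $B^-$ in exactly $q^2$ points and every plane in $\Sigma^-$ meets it in $q^2+q$ points (this is Lemma~\ref{sizes-n=1}(iii)), and then to translate these intersection numbers into a statement about the complement $C^- := \PG(3,q)\setminus B^-$. Since $|B^-| = q^3+q^2$, we have $|C^-| = \frac{q^4-1}{q-1} - (q^3+q^2) = q+1$. A plane $\pi$ meets $C^-$ in $|\pi| - |\pi\cap B^-|$ points, which is $\frac{q^3-1}{q-1} - q^2 = q+1$ if $\pi\notin\Sigma^-$ and $\frac{q^3-1}{q-1} - (q^2+q) = 1$ if $\pi\in\Sigma^-$. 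So $C^-$ is a set of $q+1$ points meeting every plane of $\PG(3,q)$ in either $1$ or $q+1$ points; in particular every plane meets $C^-$, so $C^-$ meets every line as well (a line lies in a plane, and if a plane meets $C^-$ in one point that point might miss the line, so this needs a little more care — see below).

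Next I would prove that $C^-$ is a line. The key step: take any two distinct points $P, P'$ of $C^-$ and let $l$ be the line they span. Any plane $\pi$ through $l$ contains both $P$ and $P'$, hence $|\pi\cap C^-|\geq 2$, so $\pi\notin\Sigma^-$ and $|\pi\cap C^-| = q+1$, forcing $\pi\cap C^- = \pi\cap C^-$ to have all $q+1$ of its points; I claim these all lie on $l$. Indeed, project from $P$: the planes through $l$ partition $\PG(3,q)\setminus l$, there are $q+1$ of them, each contains exactly $q+1$ points of $C^-$ counted with the points of $l$ — wait, more carefully: fix $P\in C^-$ and consider the $q+1$ planes through a line $m$ of $C^-$... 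Let me instead argue directly. Since $|C^-| = q+1$ and there is at least one plane $\pi_0$ with $|\pi_0\cap C^-| = q+1$ (any plane through two points of $C^-$, which exists as $|C^-|\geq 2$), we get $C^- \subseteq \pi_0$, so $C^-$ is a set of $q+1$ points in the plane $\pi_0$. Now within $\pi_0$, every line of $\pi_0$ extends to a plane of $\PG(3,q)$, which meets $C^-$ in $1$ or $q+1$ points; since $C^-\subseteq\pi_0$, a plane $\neq\pi_0$ meets $C^-$ in a subset of a line of $\pi_0$. If some plane meets $C^-$ in just one point, then the corresponding line of $\pi_0$ meets $C^-$ in $\leq 1$ point. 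But if a plane could ever meet $C^-$ in only one point while $C^-\subseteq\pi_0$, that is consistent. So I need: $C^-$ meets every line of $\pi_0$. A line $\ell\subseteq\pi_0$ lies in $q+1$ planes; $\pi_0$ is one of them and the other $q$ planes $\sigma$ satisfy $\sigma\cap C^- = \sigma\cap\pi_0\cap C^- = \ell\cap C^-$. Each such $\sigma$ meets $C^-$ in $1$ or $q+1$ points, but $\sigma\cap C^-\subseteq\ell$ has at most $q+1$ points and if it had $q+1$ it would be all of $\ell$, giving $\ell\subseteq C^-$, impossible since $|C^-|=q+1$ and $C^-$ can't contain two distinct lines of $\pi_0$ (they'd meet, total $\geq 2q+1 > q+1$). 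Hence $|\sigma\cap C^-| = 1$, so $|\ell\cap C^-| = 1$: every line of $\pi_0$ meets $C^-$ in exactly one point. By Proposition~\ref{BB} (Bose–Burton in $\PG(2,q)$, i.e. $\pi_0$), a set of $q+1$ points meeting every line is exactly a line of $\pi_0$. Therefore $C^- = l$ for a line $l$, and $B^- = \PG(3,q)\setminus l$.

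Finally I would identify $\Sigma^-$. A plane $\pi$ lies in $\Sigma^-$ iff $|\pi\cap C^-| = 1$, i.e. iff $|\pi\cap l| = 1$, which (since $l$ is a line) happens exactly when $\pi$ does not contain $l$ — equivalently, $\pi$ meets $l$ in a unique point. Planes containing $l$ have $|\pi\cap C^-| = q+1 \neq 1$, so they are not in $\Sigma^-$. Thus $\Sigma^-$ is precisely the set of planes of $\PG(3,q)$ meeting $l$ in exactly one point, which completes the proof.

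\textbf{Main obstacle.} The delicate point is the middle step: upgrading ``$|\pi\cap C^-|\in\{1,q+1\}$ for every plane'' to ``$C^-$ is contained in a single plane and meets every line of that plane exactly once.'' The argument needs the observation that two distinct lines in a plane already have too many points combined to fit inside a $(q+1)$-set, which rules out $\ell\subseteq C^-$ and pins down the intersection sizes; getting the casework on the $q+1$ planes through a fixed line clean (separating $\pi_0$ from the other $q$ planes and handling the $q=2$ edge case, where $q+1 = 3$ and one must check two lines of $\pi_0$ really do overshoot) is where care is required before Bose–Burton can be applied.
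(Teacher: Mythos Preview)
Your approach is essentially the same as the paper's: pass to the complement $C^-=W^-$, observe $|W^-|=q+1$ and that every plane meets it in $1$ or $q+1$ points, confine $W^-$ to a single plane $\pi_0$, show it is a blocking set there, and invoke Bose--Burton. The paper's step ``every line of $\pi_0$ meets $W^-$'' is argued by the same pencil-of-planes trick you use, just phrased as a contradiction (a line missing $W^-$ forces extra white points outside $\pi_0$).

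One wobble to clean up: your claim that \emph{every} line of $\pi_0$ meets $C^-$ in \emph{exactly one} point is false as stated --- if $C^-$ is a line $\ell_0$ (which is the conclusion!), then $|\ell_0\cap C^-|=q+1$. Your dismissal of the case $|\ell\cap C^-|=q+1$ (``impossible since $C^-$ can't contain two distinct lines'') does not apply, because you only have one line $\ell$ in hand. This is harmless, though: Proposition~\ref{BB} only requires that $C^-$ meet every line of $\pi_0$, and your argument already gives $|\ell\cap C^-|\in\{1,q+1\}$, hence $\geq 1$. Alternatively, if $|\ell\cap C^-|=q+1$ you can simply conclude $C^-=\ell$ directly and skip Bose--Burton. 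Either fix closes the gap; the rest of the argument, including the identification of $\Sigma^-$ with the planes meeting $l$ in a single point, is correct.
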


\begin{proof}
From Lemma \ref{sizes-n=1}(ii), the set of white point $W^-$ in $\PG(3,q)$ contains $q+1$ points. Again by Lemma \ref{sizes-n=1}(iii), any plane $\alpha$ not in $\Sigma^-$, it follows that $|\alpha\cap W^-|=w-=q+1$. We claim that $\alpha\cap W^-$ is a line. 

If there is a line $m$ of $\alpha$ disjoint from $\alpha\cap W^-$, then each of the $q$ planes through $m$, except $\alpha$, is a plane of $\Sigma^-$ in $\PG(3,q)$. Hence the total number of white points is greater than or equals to $q\cdot 1+q+1=2q+1$, which is a contradiction. So each line of $\alpha$ meets  $\alpha\cap W^-$. Hence by Proposition \ref{BB} , $\alpha\cap W^-$ is a line of $\PG(3,q)$. So all the $q+1$ planes not in $\Sigma^-$ are the planes through the line $\alpha\cap W^-$. The rest now follows. 
\end{proof}

Corollary \ref{main-a-b} together with Theorem \ref{main-n=1} proves Theorem \ref{main}.
\bigskip

\noindent {\bf Conclusion:} In this article, a classification of all the parabolic hyperplanes with respect to a hyperbolic and elliptic quadric in $\PG(2n+1,q)$ is given for all $n\geq 1$, $q> 2$. In addition, a classification of all the parabolic hyperplanes (planes meeting in an irreducible conic) with respect to a hyperbolic quadric in $\PG(3,q)$ is given for all $q$. Only for $n=1$ in the elliptic case, two more types of examples occur for $\Sigma^-$. For the case, $n\geq 3$ and $q=2$ is not explained in this paper.  
\bigskip

\noindent {\bf Acknowledgment:} I would like to thank Dr. Binod Kumar Sahoo for inviting me for the Annual Foundation School-II at NISER Bhubaneswar during June-July 2022 where some of the work has been done.

\vskip .5cm

\noindent {\bf Address}:\\

\noindent {\bf Bikramaditya Sahu} (Email: sahuba@nitrkl.ac.in)\\
 Department of Mathematics, National Institute of Technology\\
 Rourkela - 769008, Odisha, India.\\
\end{document}